\numberwithin{equation}{section}
\theoremstyle{definition}
\newtheorem{definition}{Definition}[section]
\newtheorem{example}[definition]{Example}
\theoremstyle{remark}
\newtheorem{remark}[definition]{Remark}
\theoremstyle{plain}
\newtheorem{theorem}[definition]{Theorem}
\newtheorem{lemma}[definition]{Lemma}
\newtheorem{result}[definition]{Result}
\newtheorem{corollary}[definition]{Corollary}
\newcommand{\al}{\alpha}
\newcommand{\bdy}{\partial}
\newcommand{\smoo}{\mathcal{C}}
\newcommand{\hol}{\mathcal{O}}
\newcommand{\poly}{\mathscr{P}}
\newcommand\hull[1]{\widehat{#1}}
\newcommand{\cplx}{\mathbb{C}}
\newcommand{\Gr}{{\sf Gr}}
\begin{document}

	\title[Uniform algebras and distinguished varieties]{Uniform algebras and distinguished varieties}
	\author{Sushil Gorai and Golam Mostafa Mondal}
	\address{Department of Mathematics and Statistics, Indian Institute of Science Education and Research Kolkata,
		Mohanpur -- 741 246}
	\email{sushil.gorai@iiserkol.ac.in, sushil.gorai@gmail.com}
	
	\address{Department of Mathematics, Indian Institute of Science Education and Research Pune, Pune -- 411 008}
	\email{golammostafaa@gmail.com, golam.mondal@acads.iiserpune.ac.in}
	\thanks{}
	\keywords{Polynomial convexity; Uniform approximation; Wermer maximality theorem; Symmetrized bidisc; Distinguished variety}
	\subjclass[2020]{Primary: 32E30, 32E20; Secondary: 47A25}
	
	\begin{abstract}
		In this article, we point out the connections between the distinguished varieties introduced by Agler and McCarthy with certain uniform algebras on bidisc studied by Samuelsson and Wold. We also prove analogues of Samuelsson-Wold result for the domains in $\mathbb{C}^2$ that are the images of the bidisc under certain proper polynomial map on $\mathbb{C}^2$. We also give a description of polynomial convex hull of graph of anti-holomorphic polynomial over the distinguished boundary of such domains. We mention the case for the symmetrized bidisc as an example. 
	\end{abstract}

	\maketitle
	\begin{center}		
		\date{}
	\end{center}
	

	\section{Introduction}\label{S:intro}
	This article connects the theory of distinguished varieties--a well-explored topic in operator theory, with the notions of uniform algebra generated by holomorphic polynomials and certain pluriharmonic functions. The latter one is also a very well-studied object in several complex variables. In particular, we observe that the failure of uniform approximation for all continuous functions on the distinguished boundary of certain domains in $\mathbb{C}^2$ by elements of holomorphic polynomials in $z_1$ and $z_2,$ and some pluriharmonic functions is the presence of certain distinguished variety in the domain where the pluriharmonic functions become holomorphic. Before making these precise, let us briefly mention the theory of distinguished varieties and the theory of uniform algebras one by one.

	\smallskip
	
	In a seminal paper \cite{AglerMcCarthy2005}, Agler and McCarthy  introduced the notion of distinguished variety in the bidisc $\mathbb{D}^2$ as follows: A non-empty set $V$ in $\mathbb{C}^2$ is said to be a \textit{distinguished variety} if there exists a polynomial $p$ in $\mathbb{C}[z, w]$ such that
	\[
	V = \{(z, w) \in \mathbb{D}^2 : p(z, w) = 0\}
	\]
	and such that
	\begin{align}\label{E:Distinguishd Variety}
		\overline{V} \cap \partial\mathbb{D}^2 = \overline{V} \cap \mathbb{T}^2.
	\end{align}
	Here, $\partial \mathbb{D}^2$ represents the boundary of the $\mathbb{D}^2$, and $ \mathbb{T}^2$ is the distinguished boundary of $\mathbb{D}^2$. A distinguished variety is an algebraic variety that exits the bidisc through the distinguished boundary. The set $\overline{V}$ is the closure of $V$ within $\overline{\mathbb{D}}^2$. We will use $\partial V$ to denote the set described by (\ref{E:Distinguishd Variety}). From a topological standpoint, $\partial V$ represents the boundary of $V$ within the zero set of $p$ instead of encompassing the entirety of $\mathbb{C}^2$.

	\par One of the fundamental results in operator theory, known as And\^{o}'s inequality \cite{Ando1963}, establishes that when $T_1$ and $T_2$ are commuting operators, each with a norm not exceeding 1, the following inequality holds for any two-variable polynomial $p$:
	\begin{align}\label{E:Ando_Inequality}
		\|p(T_1, T_2)\| \leq \|p\|_{\mathbb{D}^2}
	\end{align}
	holds. Agler and McCarthy \cite[Theorem 3.1]{AglerMcCarthy2005} gave the following improvement of the inequality (\ref{E:Ando_Inequality}): if $T_1$ and $T_2$ are matrices, then
	\[
	\|p(T_1, T_2)\| \leq \|p\|_V,
	\]
	where $V$ is a distinguished variety that depends on $T_1$ and $T_2$. Additionally, in \cite[Theorem 1.12]{AglerMcCarthy2005}, the authors have shown that all distinguished varieties in the bidisc can be expressed as
	\[
	\{(z, w) \in \mathbb{D}^2 : \det(\Psi(z) - wI) = 0\},
	\]
	where $\Psi$ is an analytic matrix-valued function defined on the disk, and it is unitary on $\partial\mathbb{D}$. A similar description of distinguished varieties in the symmetrized bidisc is given in \cite{PalShalit2014}.
	
	\smallskip
	
	\par Consider a compact subset $K$ of $\mathbb{C}^n$. The space of all continuous complex-valued functions on $K$ is denoted as $\mathcal{C}(K)$, equipped with the norm $|g| = \sup_{K}|g(z)|$. We denote the closure of the set of polynomials in $\mathcal{C}(K)$ as $\mathcal{P}(K)$. For a collection of functions $g_1, \ldots, g_N \in \mathcal{C}(K)$, we use $[g_1, \ldots, g_N; K]$ to represent the uniform algebra generated by $g_1, \ldots, g_N$ on $K$. We define the set $X={(g_1(z), \ldots, g_N(z)): z\in K}$ associated with the uniform algebra $[g_1, \ldots, g_N; K]$. Using the Stone and Weierstrass theorem, we assert that
	\begin{align*}
		[ g,\cdots , g_{N}; K] = \smoo(K)
	\end{align*}
	if and only if $\mathcal{P}(X)=\mathcal{C}(X)$ and the generators $g_1, \ldots, g_N$ separate points on $K$.
	
	\smallskip
	
	\noindent If we consider $\mathcal{P}(K)$ and $\mathcal{C}(K)$ as Banach algebras, the equality $\mathcal{P}(K)=\mathcal{C}(K)$ implies the equality of their corresponding maximal ideal spaces. The maximal ideal space of $\mathcal{C}(K)$ corresponds to $K$, and that of $\mathcal{P}(K)$ corresponds to $\widehat{K}$, where $\widehat{K}$ is the polynomial convex hull of $K$ (see \cite{Gamelin}). Here, the \textit{polynomial convex hull} of $K$ is denoted as $\widehat{K}$ and is defined as follows:	
	
	\begin{align*}
		\hull K:=\left\{\al\in\cplx^n: |p(\al)|\le\max_{K}|p|~~\forall p\in \cplx[z_1,z_2,\cdots,z_n]\right\}.
	\end{align*}
	We say $K$ is \textit{polynomially convex} when $\widehat{K}= K$. Thus, polynomial convexity serves as a necessary condition for all compacts $K$ where $\mathcal{P}(K)=\mathcal{C}(K)$ holds.
	
	\smallskip
	
	\noindent Recall that an \emph{analytic disc}\index{Analytic disc} in $\mathbb{C}^n$ is a holomorphic map $\phi:\mathbb{D}\rightarrow\mathbb{C}^n$ which is non-constant and continuous on $\overline{\mathbb{D}}$. Let $K\subset\cplx^n.$ We say an analytic disc $\phi$ is present in $K$ if $\phi(\mathbb{D})\subset K.$
	In view of Lavrentiev's \cite{Lv} result, if $K$ be a compact subset of $\cplx,$ then $\poly(K)=\smoo(K)$ if and only if $K = \widehat{K}$ and there does not exist any analytic disc in $K.$ But this is far from being a sufficient condition for polynomially convex compacts in higher dimensions. This article discusses some results in which the presence of an analytic disc is the only obstruction for polynomially convex compact $K$ for which $\poly(K)=\smoo(K).$ We now talk about the Wermer maximality theorem.
	Let $\mathbb{T}^{1}$ be the unit circle in the complex plane and $\smoo(\mathbb{T}^{1})$ be the set of all continuous complex-valued functions on $\mathbb{T}^{1}.$ Let $\mathcal{A}$ denote the set of all $f\in \smoo(\mathbb{T}^{1})$ which are boundary values of functions holomorphic on $\mathbb{D}$ and continuous on $\overline{\mathbb{D}}.$ In \cite{ZL52}, the following question was asked:	$$\textit{if } g\in \smoo(\mathbb{T}^{1})\setminus \mathcal{A}, \textit{ does the closed algebra generated by } g \textit{ and } \mathcal{A} \textit{ equal } \smoo(\mathbb{T}^{1})?$$
	
	\noindent In \cite{ZL52}, it is shown that if $g$ is real-valued or if $g$ satisfies a Lipschitz condition, the algebra generated by $g$ and $\mathcal{A}$ equals $\smoo(\mathbb{T}^{1}).$ Wermer \cite{Wer53} settled this question by proving the following:
	
	\begin{result}[Wermer]\label{R:WMT}
		If $\mathcal{B}$ is any closed subalgebra of $\smoo(\mathbb{T}^{1})$ with $\mathcal{A}\subset \mathcal{B}\subset \smoo(\mathbb{T}^{1}).$ Then either $\mathcal{A}=\mathcal{B}$ or $\mathcal{B}=\smoo(\mathbb{T}^{1}).$	
	\end{result}
	
	\noindent A uniform algebra $\mathcal{U}$ defined on a compact subset $K$ is said to be a \textit{maximal subalgebra} of $\smoo(K)$ if, for any other subalgebra $\mathcal{B}$ of $\smoo(K)$ such that $\mathcal{U}\subset \mathcal{B}\subset \smoo(K)$, it holds that either $\mathcal{U}=\mathcal{B}$ or $\mathcal{B}=\smoo(K)$. \Cref{R:WMT} is known as the \textit{Wermer Maximality Theorem.} A similar related result due to Wermer is the following \cite{Wer65}: Let $g\in C^{1}(\overline{\mathbb{D}}).$ Assume that the graph $\Gr_{\overline{\mathbb{D}}}(g)\subset\cplx^{2}$ of $g$ is polynomially convex. Let $E:=\{z\in\overline{\mathbb{D}}:\frac{\partial g}{\partial\bar{z}}(z)=0\}.$ Then 
	\begin{align*}
		[z,g;\overline{\mathbb{D}}]=\{f\in C(\overline{\mathbb{D}}):f|_{E}\in \hol(E)\}.
	\end{align*}
	
	\smallskip 
	
	\noindent It is natural to ask the version of these results to the higher dimensions. The question in the higher dimension has no clear answer like the Wermer maximality theorem. The natural object is to generalization of the second result of Wermer, even when considering the algebra generated by polynomials and a pluriharmonic function. For a domain $\Omega\subset \cplx^n,$ let $PH(\Omega)$ denote the class of all pluriharmonic function on $\Omega.$ The works of \v{C}irka \cite{Cirka69}, Izzo \cite{AIJAMS93,AIJ95}, Samuelsson and Wold \cite{SaW12}, and Izzo, Samuelsson, and Wold \cite{ISW16} focused on the study of uniform algebras generated by holomorphic and pluriharmonic functions in higher dimensions.
	In \cite{SaW12}, Samuelsson and Wold \cite{SaW12} proved the following results in the case of the bidisc $\mathbb{D}^2.$
	
	\begin{result}[Samuelsson-Wold]\label{R:SW_T1}
		Let $h_{j}\in PH(\mathbb{D}^{2})\cap\smoo^{1}(\overline{\mathbb{D}}^2)$ for $j=1,\cdots,N.$ Then either there exists a holomorphic disc in $\overline{\mathbb{D}}^2$ where all $h_{j}$'s are holomorphic, or $[z_1,z_2,h_1,\cdots,h_{N};\overline{\mathbb{D}}^2]=\smoo(\overline{\mathbb{D}}^2).$ 	
	\end{result}
	The following result can be thought of an analogue of the Wermer maximality theorem in case of the bidisc.
	\begin{result}[Samuelsson-Wold]\label{R:SW_T2}
		Let $f_{j} \in \smoo(\mathbb{T}^2)$ for $j=1,\cdots,N$ with $N\ge 1$, and assume that each $f_{j}$ extends to a pluriharmonic function on $\mathbb{D}^{2}$. Then either $[z_1,z_2,f_1,\cdots,f_{N};\mathbb{T}^2]=\smoo(\mathbb{T}^2)$,  or there exists a non-trivial algebraic variety $Z\subset \cplx^2$ with $\overline{V}\setminus V\subset \mathbb{T}^2,$ and the pluriharmonic extensions of the $f_{j}$'s are holomorphic on $Z,$  where $V=Z\cap (\overline{\mathbb{D}^2}\setminus \mathbb{T}^2).$
	\end{result}

	\begin{remark}
		In \Cref{R:SW_T2} if not all of the functions $f_1,\dots, f_N$ is holomorphic in any analytic disc that lies in $\bdy\mathbb{D}^2$ and $[z_1,z_2,f_1,\cdots,f_{N};\mathbb{T}^2]\ne \smoo(\mathbb{T}^2)$, then the algebraic variety that exists is a distinguished variety.
		As mentioned earlier, by a result of Agler and McCarthy \cite{AglerMcCarthy2005}, every distinguished variety in the bidisc is of the form 
		$\{(z, w) \in \mathbb{D}^2 : \det(\Psi(z) - wI) = 0\}$ for some matrix-valued holomorphic function $\Psi$ on $\mathbb{D}^2$. Therefore, the variety that exists in \Cref{R:SW_T2} is also of the above mentioned determinant form. We do not know what connections are there with the matrix-valued funtion $\Psi$ in \cite{AglerMcCarthy2005} and the pluriharmonic functions in \Cref{R:SW_T2}.
	\end{remark}
	
	\begin{remark}
		It might occur that the variety in \Cref{R:SW_T2} appears in the boundary of the bidisc. In this case, the variety is not a distinguished variety, but such variety can also be explained from the operator theoretic point of view from a result due to Das and Sarkar \cite[Theorem~4.3]{BataJaydeb2017}. From the proof of \Cref{R:SW_T2} it is clear that the form of such variety is $\{\lambda\}\times \mathbb{D}$ or $\mathbb{D}\times\{\lambda\}$ for some $\lambda\in\bdy\mathbb{D}$, which matches with the description in \cite[Theorem~4.3]{BataJaydeb2017}.
	\end{remark}

	Consider the domain $\Omega=\phi(\mathbb{D}^2)$ in $\cplx^2$ and we note that the distinguished boundary of $\Omega$  for the algebra $\mathcal{A}(\Omega)$ is $\Gamma_{\Omega}=\phi(\mathbb{T}^2).$ We prove the following generalization of \Cref{R:SW_T1} and \Cref{R:SW_T2} for the above domain.
	
	\begin{theorem}\label{T:Arbitry_Img_Bidsc}
		Let $h_{j}\in PH(\Omega)\cap\smoo^{1}(\overline{\Omega})$ for $j=1,\cdots,N,$ and $\phi^{-1}(\overline{\Omega})\subset \overline{\mathbb{D}}^2.$ Then, either there exists a holomorphic disc in $\overline{\Omega}$ where all $h_{j}$'s are holomorphic, or $[z_1,z_2,h_1,\cdots,h_{N};\overline{\Omega}]=\smoo(\overline{\Omega}).$ 
	\end{theorem}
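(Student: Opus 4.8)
The plan is to pull the whole problem back to the bidisc along $\phi$, apply \Cref{R:SW_T1} upstairs, and transport each alternative back to $\overline{\Omega}$. Since $\phi$ is a proper polynomial map it is a closed map, so $\phi(\overline{\dsc}^2)$ is compact and equals $\overline{\Omega}$: it is closed and contains $\phi(\dsc^2)=\Omega$, while continuity gives the reverse inclusion. Together with the hypothesis $\phi^{-1}(\overline{\Omega})\subset\overline{\dsc}^2$, this makes $\phi\colon\overline{\dsc}^2\to\overline{\Omega}$ a finite branched covering and in particular a quotient map, so the pullback $\phi^{*}\colon\smoo(\overline{\Omega})\to\smoo(\overline{\dsc}^2)$, $f\mapsto f\circ\phi$, is an isometric algebra embedding whose image is exactly the subalgebra $\smoo_{\phi}$ of functions constant on the fibers of $\phi$ (onto $\smoo_\phi$ because, both spaces being compact Hausdorff, the inverse is automatically continuous). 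Setting $\tilde h_j:=h_j\circ\phi$, holomorphicity of $\phi$ forces $\tilde h_j\in PH(\dsc^2)\cap\smoo^{1}(\overline{\dsc}^2)$, so \Cref{R:SW_T1} applies to $\tilde h_1,\dots,\tilde h_N$ on $\overline{\dsc}^2$.

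\Cref{R:SW_T1} now yields one of two alternatives upstairs. Suppose first there is a non-constant analytic disc $\psi\colon\dsc\to\overline{\dsc}^2$ on which every $\tilde h_j$ is holomorphic. I would push it forward: $\phi\circ\psi\colon\dsc\to\overline{\Omega}$ is holomorphic and continuous on $\overline{\dsc}$, and it is non-constant, because the fibers of $\phi$ are finite and a continuous map of the connected disc into a finite set is constant (which would force $\psi$ constant). On this disc $h_j\circ(\phi\circ\psi)=\tilde h_j\circ\psi$ is holomorphic for every $j$, so $\phi\circ\psi$ is a disc in $\overline{\Omega}$ carrying all $h_j$ holomorphically, which is the first alternative of the theorem. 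Equivalently, by contraposition, the absence of such a disc in $\overline{\Omega}$ forces the absence of one upstairs, and then \Cref{R:SW_T1} must deliver its second alternative, $[z_1,z_2,\tilde h_1,\dots,\tilde h_N;\overline{\dsc}^2]=\smoo(\overline{\dsc}^2)$.

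The delicate step is to pass from this last equality, whose generators are the \emph{coordinate} functions, to the desired $[z_1,z_2,h_1,\dots,h_N;\overline{\Omega}]=\smoo(\overline{\Omega})$, whose pullback is generated only by the \emph{components} $\phi_1,\phi_2$ of $\phi$ together with the $\tilde h_j$. Under $\phi^{*}$ this is the claim that the closed algebra $B:=[\phi_1,\phi_2,\tilde h_1,\dots,\tilde h_N;\overline{\dsc}^2]$ already exhausts $\smoo_{\phi}$. I would establish this by a fiberwise trace (Reynolds) averaging. Given $g\in\smoo_{\phi}$, write $g=\lim_k P_k(z_1,z_2,\tilde h_1,\dots,\tilde h_N)$ uniformly using the bidisc equality. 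Because $\phi$ is a proper polynomial map it is a finite morphism, so $\cplx[z_1,z_2]$ is a finite module over $\cplx[\phi_1,\phi_2]$, and the fiber average $w\mapsto \tfrac{1}{\#\phi^{-1}(w)}\sum_{z\in\phi^{-1}(w)}P_k\bigl(z,h(w)\bigr)$ is again a polynomial in $\phi_1,\phi_2,\tilde h_1,\dots,\tilde h_N$, hence lies in $B$; since each $\tilde h_j$ is constant on fibers and $g$ is fiber-constant, this average converges uniformly to $g$. Thus $g\in B$, giving $B=\smoo_{\phi}$ and therefore the theorem.

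The main obstacle is exactly this last transport: the naive pullback replaces the coordinate generators by $\phi_1,\phi_2$, and one must recover the missing directions from the symmetric/trace structure of the finite map. When $\phi$ realizes $\overline{\Omega}$ as a quotient $\overline{\dsc}^2/G$ by a finite group (as for the symmetrization map $\phi(z_1,z_2)=(z_1+z_2,z_1z_2)$ yielding the symmetrized bidisc, with $G=S_2$ and $\phi_1,\phi_2$ generating the invariant ring), the trace is the group average and the argument is transparent; for a general proper polynomial map one leans on finiteness of the morphism so that fiber traces land in $\cplx[\phi_1,\phi_2]$. I would also record the easy converse direction — a disc in $\overline{\Omega}$ on which all $h_j$ are holomorphic obstructs $[z_1,z_2,h_1,\dots,h_N;\overline{\Omega}]=\smoo(\overline{\Omega})$ — to confirm that the dichotomy is genuine.
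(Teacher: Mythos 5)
Your opening and closing moves coincide with the paper's: pull the $h_j$ back to $\overline{\dsc}^2$, apply \Cref{R:SW_T1} to $h_j\circ\phi$, and push any analytic disc forward along $\phi$ (your check that $\phi\circ\psi$ is non-constant because fibers of a proper map are finite is a point the paper leaves implicit). Where you genuinely diverge is the transport of the algebra equality back down to $\overline{\Omega}$. The paper never works with the generators at all: it encodes $[z_1,z_2,h_1,\dots,h_N;\overline{\Omega}]=\smoo(\overline{\Omega})$ as $\poly(X)=\smoo(X)$ for the graph $X=\Gr_{h}(\overline{\Omega})$, observes that the proper polynomial map $\widetilde{\Psi}(z,w)=(\phi(z),w)$ satisfies $\widetilde{\Psi}^{-1}(X)=\Gr_{h\circ\phi}(\overline{\dsc}^2)$ (this is where the hypothesis $\phi^{-1}(\overline{\Omega})\subset\overline{\dsc}^2$ enters), and invokes Stout's theorem (\Cref{R:Sto_propr}) to move $\poly=\smoo$ back and forth. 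This sidesteps entirely the issue you correctly identify as delicate, namely that $\phi^{*}$ carries $[z_1,z_2,h;\overline{\Omega}]$ only onto the algebra generated by $\phi_1,\phi_2$ and the $\tilde h_j$, not by the coordinates. Your fiberwise trace argument is a legitimate, more hands-on substitute, and it has the virtue of exhibiting explicit approximants; the price is that it needs real input from the algebra of finite morphisms, which \Cref{R:Sto_propr} packages away.

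There is one concrete defect in the averaging step as written. The weight $1/\#\phi^{-1}(w)$ uses the set-theoretic fiber cardinality, which drops on the branch locus; when only part of a fiber collapses, the resulting average is discontinuous there and is certainly not a polynomial in $\phi_1,\phi_2$. You must instead average with multiplicities, i.e.\ use $\frac{1}{d}\operatorname{Tr}(z^{\alpha})$ where $d$ is the topological degree and $\operatorname{Tr}$ is the trace of $\cplx[z_1,z_2]$ over $\cplx[\phi_1,\phi_2]$; that this trace lands in $\cplx[\phi_1,\phi_2]$ requires finiteness of the morphism together with normality of the target ring (or flatness/local freeness of the module), not finiteness alone. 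With that correction the weighted average of $P_k(z,h(w))$ agrees with the naive one off the branch locus, is a polynomial in $\phi_1,\phi_2,\tilde h_1,\dots,\tilde h_N$, and converges uniformly to $g$ by continuity, so your argument closes. Also note that your appeal to the fibers of $\phi$ lying in $\overline{\dsc}^2$ — needed so that uniform convergence of $P_k(z,\tilde h(z))$ applies at every fiber point — is exactly the hypothesis $\phi^{-1}(\overline{\Omega})\subset\overline{\dsc}^2$; it would be worth saying so explicitly, since this is the only place the hypothesis is used in your version.
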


	\begin{theorem}\label{T:Arbitry_Img_Torus}
		Let $f_{j}\in \smoo(\Gamma_{\Omega})$ for $j=1,\cdots,N, N\ge 1$ and assume that each $f_{j}$ extends to a pluriharmonic function on $\Omega.$ If $\phi^{-1}(\Gamma_{\Omega})\subset \mathbb{T}^2$. If $f_{j}$ is not holomorphic on any analytic disc present in the boundary $\partial \Omega$ for at least one $j$, then either
		\begin{align*}[z_1,z_2,f_1,\cdots,f_{N};\Gamma_{\Omega}]=\smoo(\Gamma_{\Omega}),
		\end{align*} 
		\noindent or there exists a distinguished variety $V$ in $\Omega$ such that the pluriharmonic extensions of the $f_{j}$'s are holomorphic on $V.$
	\end{theorem}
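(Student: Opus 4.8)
The plan is to transfer the problem to the bidisc through $\phi$, apply the Samuelsson--Wold torus theorem (\Cref{R:SW_T2}), and then push the resulting structure back to $\Omega$. Set $g_j := f_j\circ\phi$ on $\mathbb{T}^2$ for $j=1,\dots,N$. Since each $f_j$ extends pluriharmonically to $\Omega$ and $\phi$ is a holomorphic (polynomial) map, the composition $g_j$ extends pluriharmonically to $\mathbb{D}^2$; continuity on $\mathbb{T}^2$ is immediate, and the hypothesis $\phi^{-1}(\Gamma_\Omega)\subset\mathbb{T}^2$ guarantees that $\phi|_{\mathbb{T}^2}\colon\mathbb{T}^2\to\Gamma_\Omega$ is a proper, finite, surjective map. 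It is convenient to phrase both algebras through their graphs: writing $\zeta=(z_1,z_2)$, the equality $[z_1,z_2,f_1,\dots,f_N;\Gamma_\Omega]=\smoo(\Gamma_\Omega)$ is equivalent to $\mathcal{P}(K_\Omega)=\smoo(K_\Omega)$ for $K_\Omega:=\{(\zeta,f_1(\zeta),\dots,f_N(\zeta)):\zeta\in\Gamma_\Omega\}$, and likewise the bidisc algebra corresponds to the graph $K:=\{(\tau,g_1(\tau),\dots,g_N(\tau)):\tau\in\mathbb{T}^2\}$. The proper polynomial map $\Psi:=\phi\times\mathrm{id}_{\cplx^N}$ satisfies $\Psi(K)=K_\Omega$, and this is the bridge between the two pictures. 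Applying \Cref{R:SW_T2} to $g_1,\dots,g_N$ splits the argument into two cases.

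In the variety case there is an algebraic variety $Z\subset\cplx^2$ with $\overline{Z}\setminus Z\subset\mathbb{T}^2$ on whose interior part $V_Z=Z\cap\mathbb{D}^2$ the pluriharmonic extensions of the $g_j$ are holomorphic. I would set $V:=\phi(Z)\cap\Omega$. Because $\phi$ is a proper polynomial map, $\phi(Z)$ is again an algebraic variety; the condition $\phi^{-1}(\Gamma_\Omega)\subset\mathbb{T}^2$ together with $\phi(\mathbb{T}^2)=\Gamma_\Omega$ forces $\overline{V}\setminus V\subset\Gamma_\Omega$, so that $V$ exits $\Omega$ only through its distinguished boundary. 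The standing hypothesis that at least one $f_j$ is not holomorphic along any analytic disc contained in $\partial\Omega$ is exactly what rules out the degenerate possibility (the coordinate-slice varieties of the second Remark) and hence guarantees that $V$ is a genuine distinguished variety. Finally, since $\phi\colon V_Z\to V$ is a finite proper holomorphic map and $g_j=f_j\circ\phi$ is holomorphic on $V_Z$, the function $f_j$ is holomorphic on the regular part of $V$ and extends holomorphically across the (finite) singular locus, giving the second alternative of the theorem.

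In the remaining case the bidisc algebra is all of $\smoo(\mathbb{T}^2)$, equivalently $\mathcal{P}(K)=\smoo(K)$, and I must deduce $\mathcal{P}(K_\Omega)=\smoo(K_\Omega)$. Polynomial convexity transfers directly: for the proper polynomial map $\Psi$ one has $\widehat{\Psi(K)}=\Psi(\widehat{K})$, so $\widehat{K}=K$ yields $\widehat{K_\Omega}=K_\Omega$. The substantive point is the approximation itself. Here $\smoo(K_\Omega)$ corresponds, under $\Psi^\ast$, to the functions on $K$ that are constant on the fibers of $\Psi$, while $\mathcal{P}(K_\Omega)$ corresponds to the subalgebra generated by $\phi_1,\phi_2,g_1,\dots,g_N$ only; note that the coordinates $z_1,z_2$ of $\Omega$ pull back to $\phi_1,\phi_2$ and \emph{not} to the torus coordinates $\tau_1,\tau_2$. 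To close the gap I would introduce the fiber-averaging (trace) operator $R$ associated to the finite map $\phi$ and apply it to the polynomial approximants supplied by $\mathcal{P}(K)=\smoo(K)$: for a fiber-constant $h$ one has $h=Rh=\lim_k R\bigl(P_k(\tau,g)\bigr)$, and the key algebraic fact is that the fiber-symmetric (trace) combinations of the $\tau$-coordinates are themselves polynomials in $\phi_1,\phi_2$, so each $R(P_k(\tau,g))$ lies in $[\phi_1,\phi_2,g_1,\dots,g_N;\mathbb{T}^2]$. This exhibits $h$ as a uniform limit of elements of $\mathcal{P}(K_\Omega)$, yielding $[z_1,z_2,f_1,\dots,f_N;\Gamma_\Omega]=\smoo(\Gamma_\Omega)$.

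The main obstacle is precisely this last transfer, caused by the coordinate mismatch: the Samuelsson--Wold dichotomy is stated for the standard generators $\tau_1,\tau_2$, whereas the algebra on $\Gamma_\Omega$ pulls back to one generated by $\phi_1,\phi_2$. The content is therefore to show that the fiber-averaging operator carries polynomials in $(\tau,g)$ into polynomials in $(\phi,g)$, which rests on the finiteness of the proper map $\phi$ (so that traces of regular functions land in the base coordinate ring) together with the $\phi$-invariance $g_j=f_j\circ\phi$. Establishing $\widehat{\Psi(K)}=\Psi(\widehat K)$ and the lifting of analytic discs through the finite branched cover $\Psi$ are the supporting technical lemmas; for the symmetrized bidisc, where $\phi(\tau)=(\tau_1+\tau_2,\tau_1\tau_2)$ and the fibers are the orbits of the coordinate flip, the averaging operator is the ordinary symmetrization and every step becomes explicit.
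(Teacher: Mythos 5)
Your overall architecture matches the paper's: lift everything to the bidisc through the proper map $\widetilde{\Psi}=\phi\times\mathrm{id}_{\cplx^N}$, invoke \Cref{R:SW_T2} upstairs, and in the variety case push $Z$ forward by $\phi$ (the paper does this via \Cref{L:Img_Analytic Variety}, \Cref{L:Holcomp} and \Cref{L:Img_Distinguished Variety}, exactly as you sketch). Where you diverge is the approximation alternative, and here you have manufactured an obstacle that the paper's toolkit dissolves. The paper does not try to compare the generating sets $(\tau_1,\tau_2,g)$ and $(\phi_1,\phi_2,g)$ at all: it organizes the dichotomy around polynomial convexity of the graph $\Gr_h(\Gamma_\Omega)$, and in the convex case quotes \Cref{L: Approx_Cont_Func_SymmBidsc}, whose proof rests on Stout's theorem (\Cref{R:Sto_propr}) --- for a proper holomorphic $F$, $\poly(K)=\smoo(K)$ iff $\poly(F^{-1}(K))=\smoo(F^{-1}(K))$ --- combined with an Oka--Weil argument using $\overline{z}_j=1/z_j$ on $\mathbb{T}^2$. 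Since you have already passed to the graphs $K$ and $K_\Omega=\widetilde{\Psi}(K)$ with $K=\widetilde{\Psi}^{-1}(K_\Omega)$, the statement $\poly(K)=\smoo(K)$ concerns \emph{all} polynomials on the ambient $\cplx^{2+N}$, not just those in the pulled-back generators, so \Cref{R:Sto_propr} transfers it to $\poly(K_\Omega)=\smoo(K_\Omega)$ in one line; point separation by $z_1,z_2$ then gives the uniform-algebra statement. Your fiber-averaging operator is in effect a hands-on proof of the direction of Stout's theorem you need, and it can be made rigorous, but its key step --- that the trace of a polynomial under a general proper polynomial map $\phi:\cplx^2\to\cplx^2$ is again a polynomial --- is itself a nontrivial lemma (holomorphic extension across the branch locus by normality plus a {\L}ojasiewicz-type growth bound at infinity), so you are trading a citation for genuine extra work. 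The one place where you are slightly more careful than the paper is in flagging that the ``not holomorphic on any boundary disc'' hypothesis is what excludes the degenerate slice varieties and forces $\phi(Z)$ to exit through $\Gamma_\Omega$; the paper delegates this to \Cref{L:Img_Distinguished Variety}, which assumes the stronger standing conditions $\phi^{-1}(\phi(\partial\mathbb{D}^2))\subset\partial\mathbb{D}^2$ and $\phi^{-1}(\phi(\mathbb{T}^2))\subset\mathbb{T}^2$, whereas your direct limit-point argument uses only what the theorem states.
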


	As a corollary we can extend \Cref{R:SW_T1} and \Cref{R:SW_T2} to the symmetrized bidisc. Recall that the symmetrized bidisc $\mathbb{G}_{2}$ is the image of the bidisc under the  \textit{symmetrization map} $\Pi:(z_1,z_2)\to (z_1+z_2,z_1z_2)$ i.e.,
	\begin{align*}
		\mathbb{G}_{2}=\{(z_1+z_2,z_1z_2):|z_1|<1,|z_2|< 1\}.
	\end{align*}Since $\Pi^{-1}(\Pi(\overline{\mathbb{D}}^2))=\Pi^{-1}(\overline{\mathbb{G}}_2)=\overline{\mathbb{D}}^2,$ by using \Cref{R:Sto_propr}, we get that $\overline{\mathbb{G}}_2$ is polynomially convex. If $f:\mathbb{G}_2\to \cplx$ is a holomorphic function on $\mathbb{G}_2,$ then $f\circ\Pi:\mathbb{D}^2\to\cplx$ is a symmetric function on $\mathbb{D}^{2}.$ Therefore, if $\mathcal{A}(\overline{\mathbb{G}}_2)$ is the algebra of functions that are holomorphic on $\mathbb{G}_2$ and continuous on $\overline{\mathbb{G}}_2,$ then the distinguished boundary $\Gamma_{{\mathbb{G}}_{2}}$ of $\mathbb{G}_2$ is the image $\Pi(\mathbb{T}^2)$ of the torus $\mathbb{T}^2$ (the distinguished boundary of $\mathbb{D}^2$). Since $\mathbb{G}_2$ is neither convex (not even biholomorphic to any convex domain \cite{Costa2004}) nor smooth (not even the Lipschitz domain \cite{DebGorai2015}),  and hence, many results in the theory of several complex variables does not apply to $\mathbb{G}_2.$ Several authors have studied this domain over the last three decades, and it has shown to be a domain with a highly rich complex geometry and function theory: see, among many other articles, \cite{Trybula2015,KosiZwo2016,PflZwo2005,JarPfl2004,EdiZwo2005,Costa2004,AglrYung2004,AglrYngLyk2019,AYL2019,TirPalRoy2012,Sarkar2015}. 
 
	There are significant similarities and contrasts between its geometry and function theory and those of the bidisc. Here we observe that \Cref{R:SW_T1} and \Cref{R:SW_T2} continues to hold if the bidisc is replaced by the symmetrized bidisc. More precisely:

	\begin{corollary}\label{T:Symtric 1}
		Let $h_{j}\in PH(\mathbb{G}_2)\cap\smoo^{1}(\overline{\mathbb{G}}_2)$ for $j=1,\cdots,N.$ Then either there exists a holomorphic disc in $\overline{\mathbb{G}}_2$ where all $h_{j}$'s are holomorphic, or 
		\[
		[z_1,z_2,h_1,\cdots,h_{N};\overline{\mathbb{G}}_2]=\smoo(\overline{\mathbb{G}}_2).
		\]
	\end{corollary}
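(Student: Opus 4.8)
The plan is to derive \Cref{T:Symtric 1} as an immediate special case of \Cref{T:Arbitry_Img_Bidsc}, taking the proper polynomial map to be the symmetrization map $\Pi(z_1,z_2)=(z_1+z_2,z_1z_2)$ and the domain to be $\Omega=\mathbb{G}_2=\Pi(\mathbb{D}^2)$. Since \Cref{T:Arbitry_Img_Bidsc} already delivers the exact dichotomy asserted in the corollary---either a holomorphic disc in $\overline{\Omega}$ on which every $h_j$ is holomorphic, or $[z_1,z_2,h_1,\cdots,h_N;\overline{\Omega}]=\smoo(\overline{\Omega})$---the entire task reduces to checking that $\Pi$ and $\mathbb{G}_2$ satisfy the hypotheses of that theorem.

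The verification proceeds in two steps. First I would confirm that $\Pi$ is a proper polynomial self-map of $\cplx^2$: it is visibly polynomial, and its properness follows from the fact that it is a finite (two-to-one) branched cover, the fibres being the two orderings of the pair of roots of the monic quadratic $t^2-st+p$ associated to $(s,p)=\Pi(z_1,z_2)$. Second, I would verify the preimage condition $\Pi^{-1}(\overline{\mathbb{G}}_2)\subset\overline{\mathbb{D}}^2$ required in \Cref{T:Arbitry_Img_Bidsc}. This is already recorded in the discussion preceding the statement: one has $\Pi^{-1}(\overline{\mathbb{G}}_2)=\overline{\mathbb{D}}^2$, so the containment holds with equality. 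Concretely, if $(s,p)\in\overline{\mathbb{G}}_2$ then both roots of $t^2-st+p$ lie in $\overline{\mathbb{D}}$, which is exactly the statement that every preimage lies in $\overline{\mathbb{D}}^2$.

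With these two facts in hand, \Cref{T:Arbitry_Img_Bidsc} applies verbatim to the functions $h_1,\cdots,h_N\in PH(\mathbb{G}_2)\cap\smoo^1(\overline{\mathbb{G}}_2)$ and yields the two alternatives of the corollary. I do not expect any serious obstacle at the level of the corollary itself: all the analytic difficulty lives in \Cref{T:Arbitry_Img_Bidsc}, and the only genuine content here is the identification of $\mathbb{G}_2$ as an admissible image $\phi(\mathbb{D}^2)$. The one point worth stating carefully is the properness together with the preimage identity $\Pi^{-1}(\overline{\mathbb{G}}_2)=\overline{\mathbb{D}}^2$, since these are precisely the structural hypotheses that the proof of \Cref{T:Arbitry_Img_Bidsc} uses to transfer the Samuelsson--Wold dichotomy from the bidisc to $\Omega$; once they are in place, the conclusion is automatic.
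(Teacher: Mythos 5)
Your proposal is correct and follows exactly the route the paper intends: the corollary is obtained by applying \Cref{T:Arbitry_Img_Bidsc} with $\phi=\Pi$ the symmetrization map, using the identity $\Pi^{-1}(\overline{\mathbb{G}}_2)=\overline{\mathbb{D}}^2$ (recorded in the paper just before the corollary) to verify the preimage hypothesis. Your verification of properness and of the preimage condition via the roots of $t^2-st+p$ is sound, and nothing further is needed.
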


	\begin{corollary}\label{T:Symtric 2}
		Let $f_{j}\in \smoo(\Gamma_{\mathbb{G}_2})$ for $j=1,\cdots,N, N\ge 1$ and assume that each $f_{j}$ extends to a pluriharmonic function on $\mathbb{G}_2.$  If $f_{j}$ is not holomorphic on any analytic disc present in the boundary $\partial \mathbb{G}_{2}$ for at least one $j$, then either 
		\begin{align*}
			[z_1,z_2,f_1,\cdots,f_{N};\Gamma_{\mathbb{G}_2}]=\smoo(\Gamma_{\mathbb{G}_2}),
		\end{align*} 
		\noindent or there exists a distinguished variety $V$ in $\mathbb{G}_2$ such that the pluriharmonic extensions of the $f_{j}$'s are holomorphic on $V.$
	\end{corollary}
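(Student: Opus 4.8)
The plan is to deduce \Cref{T:Symtric 2} as the special case $\Omega=\mathbb{G}_2$, $\phi=\Pi$ of \Cref{T:Arbitry_Img_Torus}, where $\Pi(z_1,z_2)=(z_1+z_2,z_1z_2)$ is the symmetrization map. All the substantive analysis is already carried by that theorem, so the work reduces to checking that $\Pi$ satisfies the standing hypotheses placed on $\phi$, after which the conclusion transfers word for word.

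First I would verify that $\Pi$ is a proper polynomial self-map of $\mathbb{C}^2$: if $(s_n,p_n)=\Pi(z_1^{(n)},z_2^{(n)})$ remains in a compact set, then the coefficients of $t^2-s_nt+p_n$ stay bounded, hence so do its roots $z_1^{(n)},z_2^{(n)}$, and therefore preimages of compact sets are compact. The identities $\Pi(\mathbb{D}^2)=\mathbb{G}_2$, $\Pi^{-1}(\overline{\mathbb{G}}_2)=\overline{\mathbb{D}}^2$, and $\Gamma_{\mathbb{G}_2}=\Pi(\mathbb{T}^2)$ were already recorded in the discussion preceding the corollary and supply the remaining structural requirements. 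The one condition left to confirm is $\Pi^{-1}(\Gamma_{\mathbb{G}_2})\subset\mathbb{T}^2$. For this I would use that $\Pi(a,b)=\Pi(c,d)$ holds exactly when $\{a,b\}=\{c,d\}$, since $\Pi(a,b)$ is nothing but the pair of elementary symmetric functions of $a,b$, that is, the coefficients of the monic quadratic with those roots. Hence if $\Pi(z_1,z_2)\in\Pi(\mathbb{T}^2)$, then $\Pi(z_1,z_2)=\Pi(w_1,w_2)$ for some $(w_1,w_2)\in\mathbb{T}^2$, which forces $\{z_1,z_2\}=\{w_1,w_2\}\subset\mathbb{T}$ and thus $(z_1,z_2)\in\mathbb{T}^2$.

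It is worth noting why one cannot simply pull everything back to the bidisc and invoke \Cref{R:SW_T2} directly: under $\Pi$ the two coordinate functions on $\mathbb{G}_2$ pull back to the symmetric polynomials $z_1+z_2$ and $z_1z_2$, so the pulled-back algebra on $\mathbb{T}^2$ is generated by symmetric functions rather than by $z_1,z_2$ separately, and \Cref{R:SW_T2} does not apply to such a subalgebra. This is precisely the gap that \Cref{T:Arbitry_Img_Torus} is built to bridge. With every hypothesis now in place, the conclusion of \Cref{T:Arbitry_Img_Torus} applies: the corollary's assumptions---that each $f_j\in\smoo(\Gamma_{\mathbb{G}_2})$ extends pluriharmonically to $\mathbb{G}_2$, and that at least one $f_j$ is holomorphic on no analytic disc in $\partial\mathbb{G}_2$---are literally the hypotheses of the theorem for $\Omega=\mathbb{G}_2$. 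I expect the only remaining point to require care to be terminological: confirming that the distinguished variety in $\Omega$ furnished by \Cref{T:Arbitry_Img_Torus} is, for $\Omega=\mathbb{G}_2$, a distinguished variety in the symmetrized bidisc in the sense of \cite{PalShalit2014}. Since all the approximation-theoretic content is inherited from the theorem, no further argument should be needed.
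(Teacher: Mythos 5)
Your proposal is correct and follows essentially the same route as the paper, which obtains \Cref{T:Symtric 2} directly as the special case $\Omega=\mathbb{G}_2$, $\phi=\Pi$ of \Cref{T:Arbitry_Img_Torus}, using exactly the facts you verify ($\Pi$ proper, $\Pi^{-1}(\overline{\mathbb{G}}_2)=\overline{\mathbb{D}}^2$, and $\Pi^{-1}(\Gamma_{\mathbb{G}_2})=\mathbb{T}^2$ via the elementary-symmetric-function argument). Your closing remark about the determinantal description of the resulting variety is likewise consistent with the paper's appeal to \cite{PalShalit2014}.
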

	\begin{remark}
		In view of a result by Pal and Shalit \cite{PalShalit2014}, we see that the variety that appears in \Cref{T:Symtric 2} has the form of a zero set of a certain determinant. However, we do not know whether a similar type of determinant form can also given for the distinguished varieties that appear in \Cref{T:Arbitry_Img_Torus}.
	\end{remark}

	\section{Technical Results}\label{S:technical}

	In this section, we provide some known results and some preliminary lemmas that will be utilized to prove our results.

	\begin{result}[\cite{Sto07}]\label{R:Sto_propr}
		If $F:\mathbb{C}^n\to \mathbb{C}^n$ is a proper holomorphic map, and if $K\subset \mathbb{C}^n$ is a compact set, then the set $K$ is polynomially convex if and only if the set $F^{-1}(K)$ is
		polynomially convex, and $\mathcal{P}(K) =\mathcal{C}(K)$ if and only if $\mathcal{P}(F^{-1}(K)) =\mathcal{C}(F^{-1}(K)).$
	\end{result}

	\begin{result}[Remmert
		Proper Mapping theorem \cite{Rem56,Rem57}]\label{R:Remmert}
		Let $M, N$ be complex spaces, and $f:M\to N$ is a proper holomorphic map. If $Z$ is an analytic subvariety in $M$ then $f(Z)$ is also an analytic subvariety in $N.$ Moreover, if $Z$ is irreducible then $f(Z)$ is also irreducible subvariety of $N.$
	\end{result}
	
	The following result is from the book \cite[Page 29]{Chirka_book89}.
	
	\begin{result}(Chirka)\label{R:Image_AlgVariety}
		Let $\Omega_1\subset \cplx^p,\Omega_2\subset \cplx^m,$ are open subsets such that $\Omega=\Omega_1\times\Omega_2,$ $p+m=n,$ and $\textit{proj}_{1}:(z,w)\to z.$ Let $V$ be an analytic subset in $\Omega$ such that $\textit{proj}_{1}:V\to \Omega_1$ is a proper map. Then $\textit{proj}_{1}(V)$ is an analytic subset in $\Omega_1.$ Moreover, if $\Omega=\cplx^n,$ $\Omega_1=\cplx^p,$ and $V$ is an algebraic subset in $\cplx^n,$ then $\textit{proj}_{1}(V)$ is also an algebraic subset in $\cplx^p.$
	\end{result}
	
	The following lemma is well-known to experts. Since we have not found a explicit mention of this lemma in the literature, we decided to put it here for completeness.	
	\begin{lemma}\label{L:Img_Analytic Variety}
		Let $\Psi:\cplx^n\to \cplx^n$ be a proper polynomial map. Let $Z$ be an algebraic variety in $\cplx^n,$ then $\Psi(Z)$ is also an algebraic variety in $\cplx^n.$ 	
	\end{lemma}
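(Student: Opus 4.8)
The plan is to reduce the statement to an application of \Cref{R:Image_AlgVariety} (Chirka's result), which already gives exactly the conclusion we want for the projection map onto a coordinate subspace; the only work is to convert the given proper polynomial self-map $\Psi$ into such a projection via its graph. First I would consider the graph of $\Psi$, namely
\[
\Gamma_\Psi=\{(z,w)\in\cplx^n\times\cplx^n: w=\Psi(z)\}\subset\cplx^{2n},
\]
which is an algebraic subset of $\cplx^{2n}$ cut out by the $n$ polynomial equations $w_k-\Psi_k(z)=0$. Given the algebraic variety $Z\subset\cplx^n$, I would then form
\[
W:=\Gamma_\Psi\cap(Z\times\cplx^n)=\{(z,\Psi(z)): z\in Z\},
\]
which is again an algebraic subset of $\cplx^{2n}=\cplx^n\times\cplx^n$, being the intersection of two algebraic sets.

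Next I would identify $\Psi(Z)$ with the image of $W$ under the second projection $\mathrm{proj}_2:(z,w)\mapsto w$. To invoke \Cref{R:Image_AlgVariety} (in its algebraic form, with $\Omega=\cplx^{2n}$, $\Omega_1=\cplx^n$ the $w$-space, and $V=W$) I must check that the restriction $\mathrm{proj}_2|_W:W\to\cplx^n$ is a proper map. This is where the hypothesis that $\Psi$ is proper enters: properness of $\Psi$ means $\Psi^{-1}(L)$ is compact for every compact $L$, and since $\mathrm{proj}_2|_W$ is essentially a copy of $\Psi$ read through the graph (the fiber of $\mathrm{proj}_2|_W$ over $w$ is $\{(z,w): z\in Z,\ \Psi(z)=w\}$, homeomorphic to $Z\cap\Psi^{-1}(w)$), the preimage of a compact set under $\mathrm{proj}_2|_W$ sits inside $(\Psi^{-1}(L)\cap Z)\times L$, which is compact. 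I would spell this out: if $L\subset\cplx^n$ is compact, then $(\mathrm{proj}_2|_W)^{-1}(L)=\{(z,\Psi(z)): z\in Z,\ \Psi(z)\in L\}$ is a closed subset of the compact set $(\Psi^{-1}(L))\times L$, hence compact, establishing properness.

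With properness in hand, \Cref{R:Image_AlgVariety} applies and yields that $\mathrm{proj}_2(W)=\Psi(Z)$ is an algebraic subset of $\cplx^n$, completing the proof. I expect the main (really the only) obstacle to be the verification of properness of $\mathrm{proj}_2|_W$; this is where one must use the hypothesis on $\Psi$ rather than just its being polynomial, and where a small care is needed because $Z$ itself need not be compact. The rest is a formal manipulation of graphs and projections. An alternative route, which I would mention only if the graph argument felt too indirect, is to appeal directly to \Cref{R:Remmert} (Remmert's proper mapping theorem): since $\Psi$ is a proper holomorphic map of complex spaces and $Z$ is an analytic subvariety, $\Psi(Z)$ is an analytic subvariety, and the additional algebraicity then follows from the fact that $\Psi$ is polynomial together with the Chow-type argument already packaged in \Cref{R:Image_AlgVariety}. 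The graph reduction is cleaner, though, because it lets me cite the algebraic conclusion of \Cref{R:Image_AlgVariety} directly without separately arguing that the Remmert image is algebraic.
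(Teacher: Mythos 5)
Your proposal is correct and follows essentially the same route as the paper: the paper forms the (reversed) graph $V=\{(\Psi(z),z):z\in Z\}$, checks that the projection onto the first factor is proper using properness of $\Psi$, and invokes \Cref{R:Image_AlgVariety}, which is exactly your construction up to swapping the two coordinate blocks. Your write-up is in fact slightly more careful than the paper's, since you justify that the graph over $Z$ is algebraic and that the preimage of a compact set is a closed subset of a compact set.
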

	\begin{proof}
		Consider the algebraic variety $V=\{(\Psi(z),z):z\in Z\}$ in $\cplx^n\times \cplx^n$ and $\Omega_1=\Omega_2=\cplx^n.$ We now show that that $\textit{proj}_{1}:V\to \Omega_1$ is a proper map. Let $K\subset \cplx^n$ be a compact subset of $\cplx^n.$ Then $\textit{proj}_{1}^{-1}\{K\}=(K\times \cplx^n)\cap V=\{(\xi,\eta)\in K\times\cplx^n: (\xi,\eta)\in V\}=\{(\Psi(\eta),\eta)\in K\times\cplx^n:\eta\in Z\}=$compact (since $\Psi$ is a proper map). Therefore, $\textit{proj}_{1}:V\to \Omega_1$ is a proper map. Hence, by \Cref{R:Image_AlgVariety}, we conclude that $\textit{proj}_{1}(V)=\Psi(Z)$ is an algebraic variety.
	\end{proof}

	\begin{remark}
		The case $\Psi=\Pi$ is available in \cite[Lemma 3.1]{PalShalit2014}.	
	\end{remark}	
	Let $\Psi:\cplx^n\to \cplx^n$ be a proper holomorphic polynomial map. Let $\Omega:=\Psi(\mathbb{D}^n)$ be a domain such that $\Psi^{-1}(\Psi(\mathbb{D}^n))\subset \mathbb{D}^n,$ $\Psi^{-1}(\Psi(\partial \mathbb{D}^n))\subset \partial\mathbb{D}^n,$ and $\Psi^{-1}(\Psi(\mathbb{T}^n))\subset \mathbb{T}^n.$ The following lemma illustrates that every distinguished variety in $\Omega$ can be derived from a distinguished variety in $\mathbb{D}^n$.

	\begin{lemma}\label{L:Img_Distinguished Variety}
		Let $Z\subset \Omega$. Then $Z$ is a distinguished variety in $\Omega$ if and only if there is
		a distinguished variety $V$ in $\mathbb{D}^n$ such that $\Psi(V)=Z.$
	\end{lemma}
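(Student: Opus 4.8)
The plan is to prove both implications by transporting the defining polynomial and the boundary condition \eqref{E:Distinguishd Variety} through $\Psi$, using that $\Psi$ is proper (hence finite and closed) together with the three compatibility hypotheses $\Psi^{-1}(\Omega)\subset\mathbb{D}^n$, $\Psi^{-1}(\Psi(\partial\mathbb{D}^n))\subset\partial\mathbb{D}^n$, and $\Psi^{-1}(\Psi(\mathbb{T}^n))\subset\mathbb{T}^n$. First I would record a boundary dictionary. Since $\Psi$ is continuous and $\overline{\mathbb{D}}^n$ is compact, $\overline{\Omega}=\Psi(\overline{\mathbb{D}}^n)$; since $\Omega$ is a domain it is open, so $\partial\Omega=\overline{\Omega}\setminus\Omega=\Psi(\overline{\mathbb{D}}^n)\setminus\Psi(\mathbb{D}^n)$. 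Using $\Psi^{-1}(\Omega)\subset\mathbb{D}^n$ one checks that $\Psi(\partial\mathbb{D}^n)\cap\Omega=\emptyset$ (a preimage of a point of $\Omega$ must lie in $\mathbb{D}^n$), which upgrades the previous identity to $\partial\Omega=\Psi(\partial\mathbb{D}^n)$; by definition $\Gamma_\Omega=\Psi(\mathbb{T}^n)$, and $\mathbb{T}^n\subset\partial\mathbb{D}^n$ gives $\Gamma_\Omega\subset\partial\Omega$.

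For the \emph{if} direction, let $V=\{z\in\mathbb{D}^n:p(z)=0\}$ be a distinguished variety and write $\widetilde{V}=\{z\in\cplx^n:p(z)=0\}$ for its algebraic closure, a hypersurface since $p$ is non-constant. By \Cref{L:Img_Analytic Variety}, $\Psi(\widetilde{V})$ is an algebraic variety; because $\Psi$ is finite it preserves dimension and (via \Cref{R:Remmert}) the irreducibility of the components of $\widetilde{V}$, so $\Psi(\widetilde{V})$ is pure of dimension $n-1$, i.e.\ the zero set $\{q=0\}$ of a single polynomial $q$. I would then show $\Psi(V)=\Psi(\widetilde{V})\cap\Omega$: the inclusion $\subset$ is clear, and for $\supset$, if $w=\Psi(z)\in\Omega$ with $z\in\widetilde{V}$, then $z\in\Psi^{-1}(\Omega)\subset\mathbb{D}^n$, so $z\in\widetilde{V}\cap\mathbb{D}^n=V$. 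Hence $Z:=\Psi(V)=\{w\in\Omega:q(w)=0\}$ is cut out in $\Omega$ by one polynomial. For the boundary condition, compactness of $\overline{V}$ gives $\overline{Z}=\Psi(\overline{V})$; if $w\in\overline{Z}\cap\partial\Omega$, pick $z\in\overline{V}$ with $\Psi(z)=w$, and since $w\in\partial\Omega=\Psi(\partial\mathbb{D}^n)$ the hypothesis $\Psi^{-1}(\Psi(\partial\mathbb{D}^n))\subset\partial\mathbb{D}^n$ forces $z\in\overline{V}\cap\partial\mathbb{D}^n=\overline{V}\cap\mathbb{T}^n$, whence $w=\Psi(z)\in\Psi(\mathbb{T}^n)=\Gamma_\Omega$. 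Combined with $\Gamma_\Omega\subset\partial\Omega$, this yields $\overline{Z}\cap\partial\Omega=\overline{Z}\cap\Gamma_\Omega$, so $Z$ is a distinguished variety in $\Omega$.

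For the \emph{only if} direction, given a distinguished variety $Z=\{w\in\Omega:q(w)=0\}$ in $\Omega$, I would simply set $V:=\Psi^{-1}(Z)$. Since $Z\subset\Omega$ and $\Psi^{-1}(\Omega)\subset\mathbb{D}^n$, we have $V\subset\mathbb{D}^n$, and $V=\{z\in\mathbb{D}^n:(q\circ\Psi)(z)=0\}$ is cut out by the single polynomial $p:=q\circ\Psi$; moreover $V\ne\emptyset$ because $Z\subset\Psi(\mathbb{D}^n)$, and $\Psi(V)=Z$ because $Z\subset\Psi(\cplx^n)$. It remains to verify \eqref{E:Distinguishd Variety} for $V$, and the argument mirrors the previous one: for $z\in\overline{V}\cap\partial\mathbb{D}^n$, continuity gives $\Psi(z)\in\Psi(\overline{V})\subset\overline{Z}$ and $\Psi(z)\in\Psi(\partial\mathbb{D}^n)=\partial\Omega$, so $\Psi(z)\in\overline{Z}\cap\partial\Omega=\overline{Z}\cap\Gamma_\Omega\subset\Psi(\mathbb{T}^n)$; then $\Psi^{-1}(\Psi(\mathbb{T}^n))\subset\mathbb{T}^n$ forces $z\in\mathbb{T}^n$. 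Since $\mathbb{T}^n\subset\partial\mathbb{D}^n$ gives the reverse inclusion, $\overline{V}\cap\partial\mathbb{D}^n=\overline{V}\cap\mathbb{T}^n$, and $V$ is a distinguished variety in $\mathbb{D}^n$.

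The step I expect to be most delicate is ensuring that the image $\Psi(\widetilde{V})$ is again a \emph{hypersurface}, i.e.\ defined by a single polynomial $q$ rather than by an ideal with several generators, which is what the definition of distinguished variety demands. This rests on the facts that a proper holomorphic map between equidimensional complex spaces is finite, that finite maps preserve the dimension and (with \Cref{R:Remmert}) the irreducibility of the components of $\widetilde{V}$, and that a pure $(n-1)$-dimensional algebraic subset of $\cplx^n$ is the zero locus of one polynomial because $\cplx[z_1,\dots,z_n]$ is a UFD. The remaining subtlety is purely set-theoretic bookkeeping: the three compatibility hypotheses on $\Psi$ are exactly what is needed to convert the intersections of images appearing in \eqref{E:Distinguishd Variety} into images of intersections, and I would take care to invoke each of them in its appropriate place.
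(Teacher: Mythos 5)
Your proof is correct and follows essentially the same route as the paper: take $V=\Psi^{-1}(Z)$ in one direction, $Z=\Psi(V)$ via \Cref{L:Img_Analytic Variety} in the other, and transport the boundary condition \eqref{E:Distinguishd Variety} using the hypotheses $\Psi^{-1}(\Psi(\partial\mathbb{D}^n))\subset\partial\mathbb{D}^n$ and $\Psi^{-1}(\Psi(\mathbb{T}^n))\subset\mathbb{T}^n$. The only difference is that you spell out two points the paper leaves implicit (the identification $\partial\Omega=\Psi(\partial\mathbb{D}^n)$ and the fact that $\Psi(\widetilde{V})$ is cut out by a single polynomial), which is a welcome but not essentially different addition.
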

	\begin{proof}
		Given that $\Psi$ is a proper map, it implies that $\Psi$ is onto, and therefore, $\Psi(\Psi^{-1}(Z))=Z$. Additionally, it can be easily demonstrated that $\Psi^{-1}(Z)$ is an algebraic variety. Let us define $V:=\Psi^{-1}(Z)$. Now, we need to prove the following: $V\cap \partial \mathbb{D}^n\subset V\cap\mathbb{T}^n$.
		
		\noindent Consider an element $\alpha\in V\cap \partial \mathbb{D}^n.$ This implies that $\alpha\in \Psi^{-1}(Z)\cap \partial\mathbb{D}^n.$ Hence, we have $\Psi(\alpha)\in Z\cap \Psi(\partial \mathbb{D}^n)$. Since $Z$ is a distinguished variety, we can conclude that $\Psi(\alpha)\in Z\cap \Psi(\partial \mathbb{T}^n)$. Consequently, we can deduce that $\alpha$ lies in $\Psi^{-1}(Z\cap \Psi(\mathbb{T}^n))=\Psi^{-1}(Z)\cap \Psi^{-1}(\Psi(\mathbb{T}^n))$. By our assumption, together with this, we get that $V\cap \partial \mathbb{D}^n\subset V\cap\mathbb{T}^n$.
		
		Conversely, let us assume that $V$ is a subset of $\mathbb{D}^n$ and is a distinguished variety. By using \Cref{L:Img_Analytic Variety}, we can conclude that $\Psi(V)$ is an algebraic variety in $\Omega$. Now, we claim that $Z=\Psi(V)$ is a distinguished variety in $\Omega$. Suppose $\alpha\in Z\cap \Psi(\partial \mathbb{D}^n)=\Psi(V)\cap \Psi(\partial \mathbb{D}^n).$ We need to show that $\alpha$ also lies in $\Psi(\mathbb{T}^n)$. Since $\alpha\in Z\cap \Psi(\partial \mathbb{D}^n)$, there exist $\eta_1\in V$ and $\eta_2\in \partial \mathbb{D}^n$ such that $\Psi(\eta_1)=\Psi(\eta_2)=\alpha$. Consequently, $\eta_2$ belongs to $\Psi^{-1}(\Psi(\partial\mathbb{D}^n))$, which is a subset of $\partial\mathbb{D}^n$. Thus, we have $\eta_2\in V\cap \partial\mathbb{D}^n$, and as a result, $\Psi(\eta_2)\in \Psi(V\cap \partial\mathbb{D}^n)$. This implies that $\alpha$ lies in $\Psi(V\cap \mathbb{T}^n)$.
	\end{proof}

	\begin{remark}
		The case $\Omega=\mathbb{G}_2$ is available in \cite[Lemma 3.1]{PalShalit2014}.   
	\end{remark}

	\begin{lemma}\label{L:Holcomp}
		Let $g:G\subset\cplx^{N}\to\cplx^{N}$ be a proper holomorphic mapping and $q:g(G)\to\cplx$ be a continuous function. If $q\circ g:G\to \cplx$ is holomorphic, then $q$ is holomorphic.
	\end{lemma}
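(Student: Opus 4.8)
The plan is to use that a proper holomorphic map between equidimensional domains is a finite branched covering, so that $q$ is automatically holomorphic away from the branch locus, and then to erase that locus with a Riemann-type extension theorem. Write $\Omega':=g(G)$. Since the fibres $g^{-1}(w)$ are compact analytic subsets of $G\subset\cplx^{N}$, they are finite, so $g$ is a finite map; a finite holomorphic map of equidimensional complex manifolds is open, so $\Omega'$ is a domain and $g\colon G\to\Omega'$ is a proper, finite branched covering.

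First I would introduce the critical set $C:=\{z\in G:\det g'(z)=0\}$. Because $g$ is finite-to-one onto the open set $\Omega'$, its Jacobian cannot vanish identically, so $C$ is a proper analytic subset (a hypersurface) of $G$. Applying \Cref{R:Remmert} to the proper map $g\colon G\to\Omega'$, the branch locus $B:=g(C)$ is an analytic subset of $\Omega'$; since $\dim B\le\dim C=N-1$, it is a proper analytic subvariety of $\Omega'$ of codimension at least one.

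Next I would verify holomorphy of $q$ off $B$. Fix $w_{0}\in\Omega'\setminus B$ and choose any $z_{0}\in g^{-1}(w_{0})$; then $z_{0}\notin C$, so $\det g'(z_{0})\neq 0$, and by the inverse function theorem $g$ admits a local holomorphic inverse branch $\sigma$ with $\sigma(w_{0})=z_{0}$, defined on a neighbourhood $U\subset\Omega'\setminus B$ of $w_{0}$. For $w\in U$ we have $g(\sigma(w))=w$, and since $q$ is a genuine function on $\Omega'$ we may write $q(w)=q\bigl(g(\sigma(w))\bigr)=(q\circ g)(\sigma(w))$. As $q\circ g$ is holomorphic by hypothesis and $\sigma$ is holomorphic, $q$ is holomorphic on $U$; letting $w_{0}$ vary, $q$ is holomorphic on $\Omega'\setminus B$. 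Finally I would remove $B$: the function $q$ is continuous on all of $\Omega'$, hence locally bounded near $B$, and $B$ is analytic of codimension at least one, so Riemann's removable singularity theorem yields a holomorphic extension of $q|_{\Omega'\setminus B}$ across $B$; by continuity this extension equals $q$, whence $q$ is holomorphic on $\Omega'$.

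The step I expect to be most delicate is not a computation but the bookkeeping guaranteeing that $B$ really is a proper, codimension-$\ge 1$ analytic set --- equivalently that $\det g'\not\equiv 0$ and that \Cref{R:Remmert} applies to the restriction $g\colon G\to\Omega'$ --- since the entire argument hinges on being able to invoke the Riemann extension theorem across a thin set. The sole genuine analytic input is that extension theorem, and it requires precisely the local boundedness furnished by the continuity of $q$.
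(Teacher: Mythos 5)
Your argument is correct and follows essentially the same route as the paper: invert $g$ locally off the critical set to get holomorphy of $q$ on the complement of the branch locus $B=g(C)$, use Remmert's proper mapping theorem to see that $B$ is an analytic subvariety of positive codimension, and then apply Riemann's removable singularity theorem using the continuity of $q$. The only difference is cosmetic — you justify $\det g'\not\equiv 0$ via finiteness of the fibres, a point the paper treats as immediate.
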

	\begin{proof}
		Let us define $\Omega:=g(G).$ Since $g$ is proper holomorphic, $\Omega$ is open.
		First, we assume $z\in G$ and $\det d{g}(z)\ne 0,$ where $\det dg(z)$ is the determinant of the complex Jacobian matrix of $g$ at $z.$ Then there exists a neighborhood $V$ of $z$ and a neighborhood $W$ of $g(z)$ such that $g^{-1}:W\to V$ is holomorphic. Therefore, $q\circ g\circ g^{-1}=q$ is holomorphic at $g(z).$ Next, we define $X:=\{z\in G:\det d{g}(z)=0\}.$ Hence, $q$ is holomorphic on $\Omega\setminus g(X).$ Clearly, $X$ is an analytic variety with $\dim_{\cplx}X\le (N-1).$ Since $g$ is proper holomorphic mapping, by \Cref{R:Remmert}, $g(X)$ is also an analytic variety in $\Omega.$ Since $q$ is continuous on $\Omega$ and holomorphic on $\Omega\setminus g(X),$ by Riemann's removable singularity theorem, we can say that $q$ is holomorphic on $\Omega.$
	\end{proof}

	Let $\Psi:\cplx^n\to \cplx^n$ be a proper holomorphic map. Let $\Omega:=\Psi(\mathbb{D}^n)$ be a domain such that $\Psi^{-1}(\Psi(\mathbb{D}^n))\subset \mathbb{D}^n,$ $\Psi^{-1}(\Psi(\partial \mathbb{D}^n))\subset \partial\mathbb{D}^n,$ and $\Psi^{-1}(\Psi(\mathbb{T}^n))\subset \mathbb{T}^n.$ We denote the distinguished boundary of $\Omega$ for the algebra $\mathcal{A}(\Omega)$ by $\Gamma_{\Omega}.$ Clearly, $\Gamma_{\Omega}$ is equal to $\Psi(\mathbb{T}^n).$ 
	
	\smallskip
	
	The following theorem might be of independent interest. We will use this in our proofs.
	
	\begin{theorem}\label{L: Approx_Cont_Func_SymmBidsc}
		Let $N\ge 1$ and $f_1,\cdots,f_{N}\in \smoo(\Gamma_{\Omega}).$ Then $[z_1,\cdots,z_n,f_{1},\cdots,f_N;\Gamma_{\Omega}]=\smoo(\Gamma_{\Omega})$ if and only if $\Gr_{f}(\Gamma_{\Omega})$ is polynomially convex, where $f=(f_1,\cdots,f_{N}).$  
	\end{theorem}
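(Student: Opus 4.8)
The plan is to recognize the statement as an instance of the classical ``graph trick'' combined with the proper-map transfer principle of \Cref{R:Sto_propr}. Write $X := \Gr_f(\Gamma_\Omega)\subset\cplx^{n+N}$ and let $\pi:\Gamma_\Omega\to X$ be the map $\eta\mapsto(\eta,f(\eta))$. Since $f$ is continuous and $\Gamma_\Omega$ is compact, $\pi$ is a homeomorphism onto $X$, so $F\mapsto F\circ\pi$ is an isometric isomorphism $\smoo(X)\to\smoo(\Gamma_\Omega)$. Under this pullback the coordinate functions $z_1,\dots,z_n,w_1,\dots,w_N$ on $X$ become $z_1,\dots,z_n,f_1,\dots,f_N$ on $\Gamma_\Omega$, so $\pi$ carries $\poly(X)$ onto $[z_1,\dots,z_n,f_1,\dots,f_N;\Gamma_\Omega]$ and $\smoo(X)$ onto $\smoo(\Gamma_\Omega)$. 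Thus the asserted equivalence reduces to: $\poly(X)=\smoo(X)$ if and only if $X$ is polynomially convex. The forward implication is immediate, since polynomial convexity is a necessary condition for $\poly(X)=\smoo(X)$: the maximal ideal space of $\poly(X)$ is $\hull{X}$ while that of $\smoo(X)$ is $X$, so equality of the algebras forces $\hull{X}=X$.

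The reverse implication is the main obstacle, because the naive assertion ``polynomially convex $\implies\poly=\smoo$'' is false in general (e.g. on $\overline{\mathbb{D}}$). The idea is to exploit that $\Gamma_\Omega=\Psi(\mathbb{T}^n)$ is the image of the totally real torus. I would introduce the proper holomorphic self-map $\wtil{\Psi}:\cplx^{n+N}\to\cplx^{n+N}$, $\wtil{\Psi}(z,w)=(\Psi(z),w)$, which is proper as a product of the proper map $\Psi$ with the identity on $\cplx^{N}$. Using the standing hypothesis $\Psi^{-1}(\Psi(\mathbb{T}^n))\subset\mathbb{T}^n$ (so that $\Psi^{-1}(\Gamma_\Omega)=\mathbb{T}^n$), a direct computation gives
\[
\wtil{\Psi}^{-1}(X)=\{(z,f(\Psi(z))):z\in\mathbb{T}^n\}=\Gr_g(\mathbb{T}^n)=:Y,\qquad g:=f\circ\Psi|_{\mathbb{T}^n}.
\]
Applying \Cref{R:Sto_propr} to $\wtil{\Psi}$ (in ambient dimension $n+N$), $X$ is polynomially convex if and only if $Y$ is, and $\poly(X)=\smoo(X)$ if and only if $\poly(Y)=\smoo(Y)$. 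Hence it suffices to prove that polynomial convexity of the torus graph $Y$ forces $\poly(Y)=\smoo(Y)$.

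This last step I would settle by an invertibility argument that crucially uses that the first $n$ ambient coordinates have modulus one on $Y$. Since $Y$ is polynomially convex, the maximal ideal space of $\poly(Y)$ is $Y$ itself; as $|z_j|=1\ne 0$ on $Y$ for each $j$, the coordinate function $z_j$ is non-vanishing on this maximal ideal space and is therefore invertible in $\poly(Y)$. Its inverse, being unique, must coincide with the inverse $1/z_j=\overline{z}_j$ computed in $\smoo(Y)$, so $\overline{z}_j\in\poly(Y)$ for every $j$. Consequently $\poly(Y)$ contains the closed self-adjoint subalgebra generated by $z_1,\overline{z}_1,\dots,z_n,\overline{z}_n$; this subalgebra contains the constants and separates the points of $Y$ (the $z$-coordinates alone already separate points of a graph), so by the Stone--Weierstrass theorem it equals $\smoo(Y)$, giving $\poly(Y)=\smoo(Y)$. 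Unwinding the two reductions then completes the proof. The delicate point, and the reason the torus reduction is indispensable, is precisely that on $\Gamma_\Omega$ the ambient coordinates need not have modulus one (for instance $z_1+z_2$ on the symmetrized bidisc), so the invertibility trick becomes available only after pulling back along $\wtil{\Psi}$ to $\mathbb{T}^n$.
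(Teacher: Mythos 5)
Your proposal is correct and follows essentially the same route as the paper: both directions are handled by the graph identification, the pullback along the proper map $(z,w)\mapsto(\Psi(z),w)$ to reduce to $Y=\Gr_{f\circ\Psi}(\mathbb{T}^n)$ via \Cref{R:Sto_propr}, and the observation that $\overline{z}_j\in\poly(Y)$ together with Stone--Weierstrass. The only difference is in that last step: the paper obtains $\overline{z}_j=1/z_j$ by applying the Oka--Weil theorem to the function $1/z_1$, holomorphic on a neighborhood of the polynomially convex set $Y$, whereas you invoke Gelfand theory (the maximal ideal space of $\poly(Y)$ is $\widehat{Y}=Y$, on which $z_j$ is non-vanishing, hence invertible) --- both are standard and equally valid here.
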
	
	
	\begin{proof}
		We denote $X:=\Gr_{f}(\Gamma_{\Omega}).$ Since $[z_1,\cdots,z_n,f_{1},\cdots,f_N;\Gamma_{\Omega}]=\smoo(\Gamma_{\Omega})$ implies $\poly(X)=\smoo(X),$ hence $\widehat{X}=X.$
		
		\noindent Conversely, suppose that $\widehat{X}=X.$ We consider the proper holomorphic map $\Phi:\cplx_{z}^{n}\times\cplx_{w}^{N}\to \cplx_{z}^{n}\times\cplx_{w}^{N},$ define by
		\begin{align*}
			\Phi(z,w)=(\Psi(z),w). 	
		\end{align*}
		Clearly,
		\begin{align*}
			\Phi^{-1}(X)=\Gr_{f\circ \Psi}(\mathbb{T}^{n})=:Y.
		\end{align*}
		Since $X$ is polynomially convex, $Y$ is also polynomially convex (by \Cref{R:Sto_propr}). Let $U$ be a neighborhood of $\mathbb{T}^{n}$ such that $z_{1}\not =0~~\text{ on } U.$ Define $g(z_1,z_2,\cdots,z_n)=\frac{1}{z_1}.$ Then $g$ is holomorphic on $U.$ Also, $g$ is holomorphic on $U\times \mathbb{C}^{N}.$ Since $Y\subset U\times\cplx^{N},$ by the \textit{Oka-Weil} approximation theorem, there exists a sequence of polynomial $P_{j}$ in $\cplx^{n}_{z}\times\cplx_{w}^{N}$ such that
		$P_{j}(z,w)\to g$  uniformly on $Y.$ This implies $P_{j}(z,(f\circ \Psi)(z))\to g=\frac{1}{z_{1}}=\overline{z}_{1}$ uniformly on  $\mathbb{T}^{n}.$ Hence $\overline{z}_{1}\in [z_1,\cdots,z_n,f_{1}\circ\Psi,\cdots,f_N\circ\Psi;\mathbb{T}^{n}].$ By the similar method we can show that $\overline{z}_j\in [z_1,\cdots,z_n,f_{1}\circ\Psi,\cdots,f_N\circ\Psi;\mathbb{T}^{n}],~\forall_{j}\in\{1,\cdots,n\}.$ \sloppy Hence, $[z_1,\cdots,z_n,\overline{z}_{1},\cdots,\overline{z}_n;\mathbb{T}^n]\subset [z_1,\cdots,z_n,f_{1}\circ\Psi,\cdots,f_N\circ\Psi;\mathbb{T}^{n}].$ Therefore,
		\begin{align}\label{E:Approx_Torus}
			[z_1,\cdots,z_n,\overline{z}_{1},\cdots,\overline{z}_n;\mathbb{T}^{n}]=\smoo(\mathbb{T}^{n}) =[z_1,\cdots,z_n,f_{1}\circ\Psi,\cdots,f_N\circ\Psi;\mathbb{T}^{n}].
		\end{align}
		Note that $\poly(X)=\smoo(X)$ if and only if $\poly(\Phi^{-1}(X))=\smoo(\Phi^{-1}(X))$ (see \Cref{R:Sto_propr}) i.e., $\poly(Y)=\smoo(Y).$ Therefore, using (\ref{E:Approx_Torus}), we get that
		\begin{align*}
			[z_1,\cdots,z_n,f_{1},\cdots,f_N;\Gamma_{\Omega}]=\smoo(\Gamma_{\Omega}).	
		\end{align*}
	\end{proof}
	
	\begin{corollary}\label{C: Approx_Cont_Func_SymmBidsc}
		Let $N\ge 1,$ and $f_1,\cdots,f_{N}\in \smoo(\Gamma_{\mathbb{G}_{n}}).$ Then $[z_1,\cdots,z_n,f_{1},\cdots,f_N;\Gamma_{\mathbb{G}_{n}}]=\smoo(\Gamma_{\mathbb{G}_{n}})$ if and only if $\Gr_{f}(\Gamma_{\mathbb{G}_{n}})$ is polynomially convex, where $f=(f_1,\cdots,f_{N}).$  
	\end{corollary}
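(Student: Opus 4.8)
The plan is to obtain this corollary as the special case of \Cref{L: Approx_Cont_Func_SymmBidsc} in which the proper map $\Psi$ is taken to be the symmetrization map. Concretely, I would set $\Pi\colon\cplx^n\to\cplx^n$, $\Pi(z_1,\dots,z_n)=(e_1(z),\dots,e_n(z))$, where $e_k$ denotes the $k$-th elementary symmetric polynomial, so that $\Pi(\mathbb{D}^n)=\mathbb{G}_n$. Since \Cref{L: Approx_Cont_Func_SymmBidsc} already establishes, for every proper holomorphic polynomial map $\Psi$ meeting its three standing hypotheses, the equivalence of $[z_1,\dots,z_n,f_1,\dots,f_N;\Gamma_\Omega]=\smoo(\Gamma_\Omega)$ with the polynomial convexity of $\Gr_f(\Gamma_\Omega)$, the whole task reduces to verifying that $\Pi$ is a proper polynomial map satisfying $\Pi^{-1}(\Pi(\mathbb{D}^n))\subset\mathbb{D}^n$, $\Pi^{-1}(\Pi(\partial\mathbb{D}^n))\subset\partial\mathbb{D}^n$, and $\Pi^{-1}(\Pi(\mathbb{T}^n))\subset\mathbb{T}^n$, and that $\Gamma_{\mathbb{G}_n}=\Pi(\mathbb{T}^n)$.

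The one observation driving every verification is the explicit description of the fibres of $\Pi$: for $w\in\cplx^n$, the fibre $\Pi^{-1}(\Pi(w))$ is precisely the set of all coordinate-permutations of $w$, equivalently the set of orderings of the multiset of roots of $t^n-e_1(w)t^{n-1}+\cdots+(-1)^n e_n(w)$. First I would use this to note that each fibre is finite, so $\Pi$ is a proper holomorphic polynomial map. Then, because a coordinate-permutation leaves the multiset $\{|w_1|,\dots,|w_n|\}$ of moduli unchanged, membership of a point in $\mathbb{D}^n$ (all moduli $<1$), in $\partial\mathbb{D}^n$ (all moduli $\le 1$ with at least one equal to $1$), or in $\mathbb{T}^n$ (all moduli $=1$) depends only on that multiset and is therefore constant along each fibre of $\Pi$. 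Consequently, if a single preimage of $\Pi(w)$ lies in $\mathbb{D}^n$, in $\partial\mathbb{D}^n$, or in $\mathbb{T}^n$, then the entire fibre does, which is exactly the content of the three required inclusions.

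To finish I would recall, as noted for $\mathbb{G}_2$ in the introduction, that the distinguished boundary of $\mathbb{G}_n$ for $\mathcal{A}(\overline{\mathbb{G}}_n)$ is $\Gamma_{\mathbb{G}_n}=\Pi(\mathbb{T}^n)$; this puts us in the exact hypotheses of \Cref{L: Approx_Cont_Func_SymmBidsc} with $\Omega=\mathbb{G}_n$, and applying that theorem yields the stated equivalence directly. I do not expect a real obstacle, since all the analytic substance already resides in \Cref{L: Approx_Cont_Func_SymmBidsc} (and, through it, in \Cref{R:Sto_propr} and the Oka--Weil theorem); the only point I would write out carefully is the fibre argument above, confirming that the permutation structure of $\Pi$ genuinely forces the three boundary inclusions.
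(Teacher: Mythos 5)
Your proposal is correct and matches the paper's (implicit) argument exactly: the corollary is obtained by specializing \Cref{L: Approx_Cont_Func_SymmBidsc} to $\Psi=\Pi$ and $\Omega=\mathbb{G}_n$, with the fibre-permutation observation supplying the three inclusion hypotheses and $\Gamma_{\mathbb{G}_n}=\Pi(\mathbb{T}^n)$. The paper states the corollary without proof, so your write-up simply makes explicit the verifications the authors leave to the reader.
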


	In \cite{Jimbo03, Jimbo05}, Jimbo explored the structure of polynomial hulls concerning graphs of antiholomorphic polynomials on the torus. For the sake of completeness, we include Jimbo's result from \cite{Jimbo05} here since we will use it multiple times in this paper.
	Let $\mathbb{T}^2$ be the torus in $\cplx^2$ and $P$ be an arbitrary polynomial in $\cplx^{2}.$ In \cite{Jimbo05}, Jimbo gave a description for $\widehat{\Gr_{\overline{P}}(\mathbb{T}^2)}.$ Let the polynomial $P(z_1,z_2)$ be of degree $m$ in $z_1$ and of degree $n$ in $z_2.$ We write
	\begin{align*}
		P(z_1,z_2))=	\sum_{\substack{0\le i\le m\\
				0\le j\le n	\\
		}}
		a_{ij}z_1^{i}z_2^{j}.
	\end{align*}
	
	\noindent Therefore, on $\mathbb{T}^2,$ we have
	\begin{align*}
		\overline{P(z_1,z_2)}&
		=\frac{1}{z^{m}_{1}z^{n}_{2}}	\sum_{\substack{0\le i\le m\\
				0\le j\le n	\\
		}}
		\overline{a}_{ij}z_{1}^{m-i}{z_2}^{n-j}\\
		&=\frac{K(z_1,z_2)}{z^{m}_{1}z^{n}_{2}}=h(z_1,z_2),\text{ where } K(z_1,z_2)=\sum_{\substack{0\le i\le m\\
				0\le j\le n	\\
		}}
		\overline{a}_{ij}z_1^{m-i}z_2^{n-j}.
	\end{align*}
	\noindent Hence on $\mathbb{T}^2,$ we get that
	\begin{align*}
		\overline{P(z_1,z_2)}=h(z_1,z_2),\text{ where } h(z_1,z_2)=\frac{K(z_1,z_2)}{z_1^m z_2^n}.
	\end{align*}
	\noindent We define $L:=\{z_1=0,|z_2|\le 1\}\cup\{z_2=0,|z_1|\le 1\}$ and
	\begin{align}\label{E:hull_interior}
		X=\left\{(z_1,z_2)\in \overline{\mathbb{D}}^{2}\setminus (L\cup \mathbb{T}^2): \overline{P(z_1,z_2)}=h(z_1,z_2)\right\}.
	\end{align}

	\noindent We set
	\begin{align*}
		\triangle({z}):=
		\begin{vmatrix} 
			\frac{\partial P(z)}{\partial{z_1}}& \frac{\partial P(z)}{\partial z_2}\\[1.5ex]
			\frac{\partial h(z)}{\partial{{z_1}}}& \frac{\partial h(z)}{\partial{z_2}}\\[1.5ex]
		\end{vmatrix}.
	\end{align*}	
	
	\noindent We can write 
	\begin{align*}
		\triangle(z)
		=\frac{1}{z^{m+1}_{1}z^{n+1}_{2}}\prod^{l}_{j=1}q_{j}(z),
	\end{align*}
	where each $q_{j}$ is an irreducible polynomial in $\cplx^2.$ We define the corresponding irreducible algebraic variety $Z_{j}:=Z(q_{j})=\{z\in \cplx^2:q_{j}(z)=0\}.$ We assume $\triangle({z})\not\equiv 0$ on $X.$ Therefore, each $q_{j}$ is a non-zero holomorphic polynomial in $\cplx^2.$ 
	
	\smallskip
	
	\noindent We denote $Q_{j}=Z_{j}\cap \mathbb{T}^2.$
	
	\begin{result}[Jimbo]\label{R:Descriptn_Hull_Jimbo}
		We let $J=\{j\in \{1,\cdots,l\}: \emptyset\ne Q_{j}\ne \widehat{Q_{j}}, \widehat{Q_{j}}\setminus L\subset X\}.$ 
		\begin{enumerate}
			\item [(i)]If $J=\emptyset,$ then $\widehat{\Gr_{\overline{P}}(\mathbb{T}^2)}=\Gr_{\overline{P}}(\mathbb{T}^2),$ and $	[z_1,z_2,\overline{P};\mathbb{T}^2]=\smoo(\mathbb{T}^2);$
			\item[(ii)] If $J\ne \emptyset,$ then 
		\end{enumerate}	
		\begin{align*}
			\widehat{\Gr_{\overline{P}}( \mathbb{T}^2)}=\Gr_{\overline{P}}(\mathbb{T}^2)\cup\bigg(\cup_{j\in J}  \Gr_{\overline{P}}(\widehat{Q_{j}})\bigg).
		\end{align*}
	\end{result}

	\section{proof of \Cref{T:Arbitry_Img_Bidsc,T:Arbitry_Img_Torus}}
	Note that the map $\phi:\cplx^2\to \cplx^2$ is defined as $\phi(z)=(p_{1}(z),p_{2}(z)).$ We consider the proper holomorphic map $\widetilde{\Psi}:\cplx^{2+N}\to \cplx^{2+N},$ defined as follows:
	\begin{align}\label{E:GenProperMap}
		\widetilde{\Psi}(z_1,z_2,w_1,\cdots,w_{N})=\left(\phi(z_1,z_2),w_1,\cdots,w_N\right),
	\end{align}
	where $~ (z_1,z_2)\in\cplx^2,$ and $(w_1,\cdots,w_N)\in\cplx^N.$
	Recall that $\Omega=\phi(\mathbb{D}^2)$ and $\Gamma_{\Omega}=\phi(\mathbb{T}^2).$\\
	
	\begin{proof}[	Proof of \Cref{T:Arbitry_Img_Bidsc}]

		We claim that $\widetilde{\Psi}^{-1}(\Gr_{h}(\overline{\Omega}))=\Gr_{h\circ\phi}(\mathbb{\overline{D}}^2)$: let 
		\begin{align*}
			(\alpha,\beta)\in \widetilde{\Psi}^{-1}(\Gr_{h}(\overline{\Omega}))&
			\implies  \widetilde{\Psi}(\alpha,\beta)\in \Gr_{h}(\overline{\Omega})\\
			& \implies (\phi(\alpha),\beta)\in \Gr_{h}(\overline{\Omega})\\
			& \implies \beta=h(\phi(\alpha)) \text{ and } \phi(\alpha)\in \overline{\Omega}. 
		\end{align*}
		Now 
		\begin{align*}
			\phi(\alpha)\in \overline{\Omega} & \implies \alpha
			\in \phi^{-1}(\phi(\alpha))\subset \phi^{-1}( \overline{\Omega})\subset \overline{\mathbb{D}}^2.
		\end{align*}
		Therefore
		$\widetilde{\Psi}^{-1}(\Gr_{h}(\overline{\Omega}))\subset \Gr_{h\circ\phi}(\mathbb{\overline{D}}^2).$ \\
		\vspace{2mm} 
		\noindent Conversely, let
		\begin{align*}
			(p,q)\in \Gr_{h\circ \phi}(\overline{\mathbb{D}}^2) &\implies 
			q=(h\circ \phi)(p) \text{ and } p\in \overline{\mathbb{D}}^2\\
			&\implies q=h(\phi(p)) \text{ and } \Pi(p)\in \overline{\Omega}\\
			&\implies (\phi(p),q)\in \Gr_{h}(\overline{\Omega})\\
			&\implies \widetilde{\Psi}(p,q)\in \Gr_{h}(\overline{\Omega})\\
			&\implies (p,q)\in \widetilde{\Psi}^{-1}\left(\Gr_{h}(\overline{\Omega})\right).
		\end{align*}
		Hence $\Gr_{h\circ\phi}(\mathbb{\overline{D}}^2)\subset\widetilde{\Psi}^{-1}(\Gr_{h}(\overline{\Omega})).$ 
		Therefore,
		$\widetilde{\Psi}^{-1}(\Gr_{h}(\overline{\Omega}))=\Gr_{h\circ\phi}(\mathbb{\overline{D}}^2).$	
		Since $\widetilde{\Psi}$ is proper holomorphic mapping and $\widetilde{\Psi}^{-1}(\Gr_{h}(\overline{\Omega}))=G_{h\circ\phi}(\mathbb{\overline{D}}^2),$ by \Cref{R:Sto_propr}, we can say that  $\poly\left(\Gr_{h}(\overline{\Omega})\right)=\smoo\left(\Gr_{h}(\overline{\Omega})\right)$ if and only if $\poly\left(\Gr_{h\circ\phi}(\overline{\mathbb{D}}^2)\right)=\smoo\left(\Gr_{h\circ\phi}(\overline{\mathbb{D}}^2)\right).$ We note that $h\circ \phi$ is pluriharmonic on $\mathbb{D}^2$ and continuous on $\overline{\mathbb{D}}^2.$ Therefore, two cases hold.
		
		\vspace{1.5mm}
		
		\noindent  {\bf Case I:} $\poly\left(\Gr_{h\circ\phi}(\overline{\mathbb{D}}^2)\right)=\smoo\left(\Gr_{h\circ\phi}(\overline{\mathbb{D}}^2)\right).$ In this case we have
		$\poly\left(\Gr_{h}(\overline{\Omega})\right)=\smoo\left(\Gr_{h}(\overline{\Omega})\right).$\\
		
		\noindent  {\bf Case II:} $\poly\left(\Gr_{h\circ\phi}(\overline{\mathbb{D}}^2)\right)\ne\smoo\left(\Gr_{h\circ\phi}(\overline{\mathbb{D}}^2)\right).$
		Therefore, by \Cref{R:SW_T1}, there exists an analytic disc $g:\mathbb{D}\hookrightarrow{} \overline{\mathbb{D}}^2$ where $(h_{j}\circ\phi)\circ g:\mathbb{D}\hookrightarrow{} \mathbb{\overline{D}}^2$ is holomorphic for all $j=1,\cdots,N.$ If we take $\gamma:=\phi\circ g,$ then clearly $\gamma:\mathbb{D}\hookrightarrow{}\overline{\Omega}$ is an analytic disc in $\overline{\Omega}$ such that $h_j$ is holomorphic on $\gamma(\mathbb{D})$ (by \Cref{L:Holcomp}) for all $j=1,\cdots,N.$ This proves the theorem.
	\end{proof}	
	
	\begin{proof}[Proof of \Cref{T:Arbitry_Img_Torus}]
		Let $h_{j}$ denotes the pluriharmonic extension of $f_{j}$ to $\Omega$ and write $h=(h_1,\cdots,h_{N}):\overline{\Omega}\to \cplx^{N}.$ We have $\widetilde{\Psi}$ is proper holomorphic mapping and $\widetilde{\Psi}^{-1}(\Gr_{h}(\Gamma_{\Omega}))=\Gr_{h\circ\phi}(\mathbb{T}^2).$ Therefore, by \Cref{R:Sto_propr}, $\Gr_{h}(\Gamma_{\Omega})$ is polynomially convex if and only if $\Gr_{h\circ\phi}(\mathbb{T}^2)$ is polynomially convex. We note that $h\circ \phi$ is pluriharmonic on $\mathbb{D}^2$ and continuous on $\overline{\mathbb{D}}^2.$ Therefore, two cases hold.
		
		\vspace{1mm}
		
		\noindent  {\bf Case I:} $\Gr_{h}(\Gamma_{\Omega})$ is polynomially convex. In view of \Cref{L: Approx_Cont_Func_SymmBidsc}, we have $$[z_1,z_2,f_{1},\cdots,f_N;\Gamma_{\Omega}]=\smoo(\Gamma_{\Omega}).$$
		
		\noindent  {\bf Case II:} $\Gr_{h}(\Gamma_{\Omega})$ is not polynomially convex. Consequently, $\Gr_{h\circ\phi}(\mathbb{T}^2)$ is not polynomially convex. Therefore, by \Cref{R:SW_T2}, there exists a distinguished variety $Z\subset\mathbb{D}^2$ where $(h_{j}\circ\phi)$ is holomorphic for all $j=1,\cdots,N.$ Since $\phi$ is a proper holomorphic mapping, by \Cref{L:Img_Analytic Variety}, we have $\phi(Z)$ is also an algebraic variety. Since $\phi$ is proper holomorphic, $(h_{j}\circ\phi)$ is holomorphic on $Z,$ then $h_{j}$ is also holomorphic on $\phi(Z)$ (by \Cref{L:Holcomp}). Since $\phi$ sends distinguished variety of $\mathbb{D}^2$ to distinguished variety of $\Omega$ (\Cref{L:Img_Distinguished Variety}), we have $\phi(Z)\cap b\Omega\subset \Gamma_{\Omega}.$ 
	\end{proof}

	\section{Description of Polynomial Hull }
	In this section, we provide a description of the polynomial convex hull of the graph of an anti-holomorphic polynomial over the distinguished boundary of the domain $\Omega,$ where $\Omega$ is the image of the bidisc under certain proper polynomial map from $\cplx^2$ to $\cplx^2.$

	\par Let $F=(f_1,f_2,\cdots,f_{n}):\cplx^n\to \cplx^n$ be a proper map. Let 	$$J_{f}(z)=
	\begin{vmatrix} 
		\frac{\partial f_{1}}{\partial{z_1}}(z) &  \frac{\partial f_{1}}{\partial{z_2}}(z)&\cdots&\frac{\partial f_{1}}{\partial{z_n}}(z)\\[1.5ex]
		\vdots&\vdots&\cdots&\vdots\\[1.5ex]
		\frac{\partial f_{n}}{\partial{z_1}}(z) & \frac{\partial f_{n}}{\partial{z_2}}(z)&\cdots&\frac{\partial f_{n}}{\partial{z_n}}(z)\\[1.5ex]
	\end{vmatrix}.$$
	The \textit{critical locus} of $f$ is the complex analytic variety $Z(J_{f})=\{z\in \cplx^n:J_{f}(z)=0\}\subset \cplx^n.$ The \textit{branch locus} $B(f)$ of $f$ is the image of the critical locus. Since $f$ is proper,
	\begin{align*}
		f:\cplx^n\setminus f^{-1}(B(f))\to \cplx^n\setminus B(f)
	\end{align*}
	is a covering map of finite degree $d;$  $d$ is said to be the \textit{topological degree} of $f.$

	\begin{definition}
		Two proper map $\phi,\tilde{\phi}:\cplx^{2}\to \cplx^2$ are said to be \textit{equivalent} if there exist $f,g\in \text{Aut}(\cplx^2)$ such that $\phi=f\circ\tilde{\phi}\circ g.$ 
	\end{definition}

	
	Consider two holomorphic polynomials, $p_1$ and $p_2$, defined in $\mathbb{C}^2$. Let $\phi(z) = (p_1(z), p_2(z))$ represent a proper holomorphic mapping from $\mathbb{C}^2$ to $\mathbb{C}^2$, equivalent to $\tilde{\phi}(z_1, z_2) = (z_1^m, z_2^n)$ for some natural numbers $m$ and $n$.
	There is a characterization due to Lamy \cite{Lamy05} (see also Bisi and Polizzi \cite{BisiPolizzi2010}) for $m=1$ and $n=2$ as follows: a proper polynomial map $f:\mathbb{C}^2\to \mathbb{C}^2$ with a topological degree of 2 is equivalent to $g(z_1,z_2)=(z_1,z^{2}_2).$

	\medskip
	
	Let $P(z_1, z_2)$ be any polynomial in $\mathbb{C}^2$. We aim to calculate $\widehat{\Gr_{\overline{P}}(\Gamma_{\Omega})}$. It is evident that $\widetilde{\Psi}^{-1}(\Gr_{\overline{P}}(\Gamma_{\Omega})) = \Gr_{\overline{P}\circ\phi}(\mathbb{T}^2) = \Gr_{\overline{P\circ\phi}}(\mathbb{T}^2)$ ($\widetilde{\Psi}$ is given by (\ref{E:GenProperMap})). Consequently, $\Gr_{\overline{P}}(\Gamma_{\Omega}) = \widetilde{\Psi}\left(\operatorname{Gr}_{\overline{P\circ\phi}}(\mathbb{T}^2)\right)$. In this scenario, the following result holds.

	\begin{lemma}\label{L:ImgHull_Graph_Pro_Map_D2}
		$\widehat{\widetilde{\Psi}(Y)}=\widetilde{\Psi}\left(\widehat{Y}\right),$ where $Y=\Gr_{\overline{P\circ\phi}}(\mathbb{{T}}^2).$
	\end{lemma}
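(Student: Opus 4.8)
The plan is to prove the lemma by reducing it to the general result \Cref{R:Sto_propr} on proper holomorphic maps, which relates polynomial convexity of a set to that of its preimage. The key observation is that $\widetilde{\Psi}$ is a proper holomorphic \emph{polynomial} map on $\cplx^{2+N}$, so it respects polynomial convex hulls in a strong sense. First I would record the two inclusions whose conjunction gives the claimed equality $\widehat{\widetilde{\Psi}(Y)}=\widetilde{\Psi}(\widehat{Y})$.

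For the inclusion $\widetilde{\Psi}(\widehat{Y})\subseteq\widehat{\widetilde{\Psi}(Y)}$, I would argue directly from the definition of polynomial convex hull. Let $\alpha\in\widehat{Y}$; then for every polynomial $q$ on $\cplx^{2+N}$ one has $|q(\alpha)|\le\max_{Y}|q|$. Given any polynomial $p$ on the target $\cplx^{2+N}$, the composition $p\circ\widetilde{\Psi}$ is again a polynomial (since $\widetilde{\Psi}$ has polynomial components), so applying the hull inequality to $q=p\circ\widetilde{\Psi}$ yields
\begin{align*}
	|p(\widetilde{\Psi}(\alpha))|=|(p\circ\widetilde{\Psi})(\alpha)|\le\max_{Y}|p\circ\widetilde{\Psi}|=\max_{\widetilde{\Psi}(Y)}|p|.
\end{align*}
Since $p$ was arbitrary, $\widetilde{\Psi}(\alpha)\in\widehat{\widetilde{\Psi}(Y)}$, giving the first inclusion. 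This direction is elementary and uses only that $\widetilde{\Psi}$ is polynomial.

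The reverse inclusion $\widehat{\widetilde{\Psi}(Y)}\subseteq\widetilde{\Psi}(\widehat{Y})$ is where the properness is essential, and I expect this to be the main obstacle. The natural route is to invoke \Cref{R:Sto_propr}: applying it to the compact set $K=\widetilde{\Psi}(\widehat{Y})$ (which is compact because $\widetilde{\Psi}$ is continuous and $\widehat{Y}$ is compact), it suffices to show $K$ is polynomially convex, for then $\widehat{\widetilde{\Psi}(Y)}\subseteq\widehat{K}=K=\widetilde{\Psi}(\widehat{Y})$ once we know $\widetilde{\Psi}(Y)\subseteq K$ (which is immediate since $Y\subseteq\widehat{Y}$). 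By \Cref{R:Sto_propr}, $K$ is polynomially convex if and only if $\widetilde{\Psi}^{-1}(K)$ is, so I would aim to show that $\widetilde{\Psi}^{-1}(\widetilde{\Psi}(\widehat{Y}))$ is polynomially convex. The delicate point is that $\widetilde{\Psi}^{-1}(\widetilde{\Psi}(\widehat{Y}))$ may be strictly larger than $\widehat{Y}$, since $\widetilde{\Psi}$ is finite-to-one; one must control the extra fibre points. Here I would use that $\widehat{Y}=\widetilde{\Psi}^{-1}(\widetilde{\Psi}(\widehat{Y}))$ can be deduced from the structure of $\widetilde{\Psi}$ together with the fact that $Y=\widetilde{\Psi}^{-1}(\widetilde{\Psi}(Y))$ is already saturated with respect to $\widetilde{\Psi}$ (indeed $Y=\Gr_{\overline{P\circ\phi}}(\mathbb{T}^2)=\widetilde{\Psi}^{-1}(\Gr_{\overline{P}}(\Gamma_{\Omega}))$ by the computation preceding the lemma). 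Since \Cref{R:Sto_propr} gives that $\widehat{Y}=\widetilde{\Psi}^{-1}(\widehat{\widetilde{\Psi}(Y)})$ is itself saturated, applying $\widetilde{\Psi}$ and then $\widetilde{\Psi}^{-1}$ returns $\widehat{Y}$, and polynomial convexity of $\widehat{Y}$ transfers to $K$ via \Cref{R:Sto_propr}. Assembling the two inclusions completes the proof.
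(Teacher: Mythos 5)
Your first inclusion $\widetilde{\Psi}(\widehat{Y})\subseteq\widehat{\widetilde{\Psi}(Y)}$ is correct, and your argument for it (composing an arbitrary test polynomial with the polynomial map $\widetilde{\Psi}$) is if anything more elementary than the paper's, which routes this direction through \Cref{R:Sto_propr}.

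The reverse inclusion, however, contains a genuine gap, located exactly at the point you yourself flag as delicate. Your plan requires $\widetilde{\Psi}^{-1}(\widetilde{\Psi}(\widehat{Y}))=\widehat{Y}$, i.e.\ that $\widehat{Y}$ is saturated for $\widetilde{\Psi}$, but the justification you offer is circular: \Cref{R:Sto_propr} yields only the \emph{inclusion} $\widehat{Y}\subseteq\widetilde{\Psi}^{-1}\bigl(\widehat{\widetilde{\Psi}(Y)}\bigr)$ (the right-hand side is polynomially convex and contains $Y$), whereas the equality $\widehat{Y}=\widetilde{\Psi}^{-1}\bigl(\widehat{\widetilde{\Psi}(Y)}\bigr)$ that you assert is precisely equivalent to the lemma itself (apply $\widetilde{\Psi}$ to both sides). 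Saturation of $Y$ does not formally propagate to $\widehat{Y}$; a genuine argument is needed, and this is where the standing hypothesis that $\phi$ is equivalent to $\tilde{\phi}(z_1,z_2)=(z_1^m,z_2^n)$ --- which your proposal never invokes --- enters. The paper writes $\phi=f\circ\tilde{\phi}\circ g$ with $f,g\in \text{Aut}(\cplx^2)$, so that the $\widetilde{\Psi}$-fibre through a point $(\xi_1,\xi_2,\eta)\in\widehat{Y}$ is the orbit of that point under the finite group of polynomial automorphisms $\tilde{F}(z,w)=\bigl((g^{-1}\circ F\circ g)(z),w\bigr)$ with $F(z_1,z_2)=(\lambda_m^k z_1,\lambda_n^r z_2)$. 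These automorphisms preserve $Y$, because they permute the fibres of $\phi$ (on which $\overline{P\circ\phi}$ is constant) and because $\phi^{-1}(\phi(\mathbb{T}^2))\subset\mathbb{T}^2$; hence if some polynomial $\chi$ separated a fibre point $(a_k,b_r,\eta)$ from $\widehat{Y}$, the polynomial $\chi\circ\tilde{F}$ would separate $(\xi_1,\xi_2,\eta)$ from $\widehat{Y}$, a contradiction. This deck-transformation step is the actual content of the lemma; without it (or some substitute controlling the extra fibre points) your proof does not close.
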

	
	\begin{proof}
		Since $\widetilde{\Psi}$ is a proper holomorphic map, by using \Cref{R:Sto_propr}, we have that $\widetilde{\Psi}^{-1}\left(\widehat{\widetilde{\Psi}(Y)}\right)$ is polynomially convex. Therefore
		\begin{align*}
			\widehat{Y}\subset \widehat{\widetilde{\Psi}^{-1}\left(\widehat{\widetilde{\Psi}(Y)}\right)}\subset \widetilde{\Psi}^{-1}\left(\widehat{\widetilde{\Psi}(Y)}\right).
		\end{align*}
		This implies, $\widetilde{\Psi}(\widehat{Y})\subset \widehat{\widetilde{\Psi}(Y)}.$
		
		\noindent Next, we show that $\widetilde{\Psi}^{-1}\left(\widetilde{{\Psi}}(\widehat{Y})\right)\subset \widehat{Y}.$ To prove this, let $(\alpha_1,\alpha_2,\beta)\in \widetilde{\Psi}^{-1}\left(\widetilde{{\Psi}}(\widehat{Y})\right).$ Then there exists $(\xi_1,\xi_2,\eta)\in \widehat{Y}$ such that $\widetilde{\Psi}(\alpha_1,\alpha_2,\beta)=\Psi(\xi_1,\xi_2,\eta).$ This implies, $\phi(\alpha_1,\alpha_2)=\phi(\xi_1,\xi_2)$ and $\beta=\eta.$ Since, $\phi$ is proper polynomial map and is equivalent to $\tilde{\phi}(z_1,z_2)=(z^m_1,z^n_{2}),$ there exist $f,g\in \text{Aut}(\cplx^2)$ such that $\phi=f\circ \tilde{\phi} \circ g.$  Then 
		
		\begin{align*}
			&\phi(\alpha_1,\alpha_2)=\phi(\xi_1,\xi_2)\\
			\implies &(f\circ \tilde{\phi} \circ g)(\alpha_1,\alpha_2)=(f\circ \tilde{\phi} \circ g)(\xi_1,\xi_2)\\
			\implies &(\tilde{\phi} \circ g)(\alpha_1,\alpha_2)=(\tilde{\phi} \circ g)(\xi_1,\xi_2)\\
			\implies& g^{m}_{1}(\alpha_1,\alpha_2)=g^{m}_{1}(\xi_1,\xi_2) \text{ and } g^{n}_2(\alpha_1,\alpha_2)=g^{n}_2(\xi_1,\xi_2), \text{ where } g=(g_1,g_2).\\
			\implies &  (\alpha_1,\alpha_2)=g^{-1}\{(\lambda^{k}_{m}g_1(\xi_1,\xi_2),\lambda^{r}_{n}g_2(\xi_1,\xi_2)\}=(a_{k},b_{r}),
		\end{align*}
		where $\lambda_{l}=\cos \frac{2\pi}{l}+i\sin\frac{2\pi}{l},$ $k\in\{0,\cdots,m-1\}$ and $ r\in \{0,\cdots,n-1\}.$
		
		\smallskip
		
		\noindent It  remains to show that $(a_{k},b_{r},\eta)\in \widehat{Y}.$ If possible, assume that $(a_{k},b_{r},\eta)\notin \widehat{Y}$ for some $k\in \{0,\cdots,m-1\},r\in \{0,\cdots,n-1\}.$ Then there exists a polynomial $\chi$ in $\cplx^2_{z}\times\cplx_{w}$ such that 
		\begin{align}\label{E:Other_pt_InHull}
			|\chi(a_{k},b_{r},\eta)|>\sup_{Y}|\chi(z,w)|.
		\end{align}
		
		\noindent Let us define $F(z_1,z_2):=(\lambda^{k}_{m}z_1,\lambda^{r}_{n}z_{2}),$ and $\tilde{F}(z_1,z_2,w):=((g^{-1}\circ F\circ\ g)(z),w).$  Since $\phi^{-1}(\phi (\mathbb{T}^2))\subset \mathbb{T}^2$ (hence $(g^{-1}\circ F\circ\ g)(z)\in \mathbb{T}^2$ if $z\in \mathbb{T}^2$), using (\ref{E:Other_pt_InHull}), we get that
		\begin{align}\label{E:pt_InHull}
			|(\chi\circ \tilde{F})(\xi,\eta)|>\sup_{Y}|(\chi\circ \tilde{F})(z,w)|.
		\end{align}
		Since $\tilde{F}\in \text{Aut}(\cplx^{3}),$ (\ref{E:pt_InHull}) says that $(\xi,\eta)\notin \widehat{Y}$ and this is a contradiction. Hence $(a_{k},b_{r},\eta)\in \widehat{Y}.$ Therefore, $\widetilde{\Psi}^{-1}\left(\widetilde{\Psi}(\widehat{Y})\right)=\widehat{Y}.$  
		Since $\widetilde{\Psi}$ is proper holomorphic map, by using \Cref{R:Sto_propr}, we can say that $\widetilde{\Psi}(\widehat{Y})$ is polynomially convex. Therefore, $\widehat{\widetilde{\Psi}(Y)}\subset \widetilde{\Psi}(\widehat{Y}).$ This proves the lemma.
	\end{proof}	
	
	By using \Cref{L:ImgHull_Graph}, we can say that
	
	\begin{align*}
		\widehat{\Gr_{\overline{P}}({\Gamma_{\Omega})}}=\widetilde{\Psi}\left(\widehat{\Gr_{\overline{P\circ\phi}}(\mathbb{{T}}^2)}\right).
	\end{align*}
	Therefore, to give  a description for $\widehat{\Gr_{\overline{P}}({\Gamma_{\Omega})}},$ it is enough to compute $\widehat{\Gr_{\overline{P\circ\phi}}(\mathbb{{T}}^2)}.$

	\medskip
	
	\subsection{ Description of Hull on Symmetrized Bidisc} Let $P(z_1,z_2)$ be any polynomial in $\cplx^{2}.$ By \Cref{L:ImgHull_Graph_Pro_Map_D2}, we calculate  $\widehat{\Gr_{\overline{P}}(\Gamma_{{\mathbb{G}}_{2}})}.$ If we take $p_{1}(z)=z_1+z_2$ and $p_{1}(z)=z_1z_2,$ then $\phi=\Pi$ and $\widetilde{\Psi}(z,w)=(\Pi(z),w)$ is a proper map from $\cplx^3$ to $\cplx^3.$ It is easy to show that $\Pi$ a proper polynomial map of topological degree $2,$ and hence equivalent to $(z_1,z^2_{2}).$ Clearly, $\widetilde{\Psi}^{-1}(\Gr_{\overline{P}}({\Gamma}_{\mathbb{G}_2}))=\Gr_{\overline{P}\circ\Pi}(\mathbb{{T}}^2)=\Gr_{\overline{P\circ\Pi}}(\mathbb{{T}}^2).$ Therefore, $\Gr_{\overline{P}}({\Gamma}_{\mathbb{G}_2}))=\widetilde{\Psi}\left(\Gr_{\overline{P\circ\Pi}}(\mathbb{{T}}^2)\right).$
	
	\smallskip
	
	By \Cref{L:ImgHull_Graph_Pro_Map_D2}, we get that
	\begin{lemma}\label{L:ImgHull_Graph}
		$\widehat{\widetilde{\Psi}\left(Y\right)}=\widetilde{\Psi}\left(\widehat{Y}\right),$ where $Y=\Gr_{\overline{P\circ\Pi}}(\mathbb{{T}}^2).$	
\end{lemma}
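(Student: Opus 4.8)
The plan is to recognize that \Cref{L:ImgHull_Graph} is nothing but the special case $\phi = \Pi$ of the general \Cref{L:ImgHull_Graph_Pro_Map_D2}, so the entire task reduces to checking that the symmetrization map $\Pi(z_1,z_2) = (z_1+z_2, z_1 z_2)$ meets the hypotheses required there; namely, that $\Pi$ is a proper holomorphic polynomial map on $\cplx^2$ which is equivalent to $\tilde\phi(z_1,z_2) = (z_1^m, z_2^n)$ for suitable $m,n$, and that the associated lift $\widetilde{\Psi}(z,w) = (\Pi(z),w)$ is a proper self-map of $\cplx^3$.

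First I would verify that $\Pi$ is proper. A fibre $\Pi^{-1}(s,p)$ consists of the (unordered) roots of $t^2 - st + p = 0$, so each fibre has at most two points, and a bound on $(s,p)$ bounds the roots; hence preimages of compact sets are compact and $\Pi$ is proper. The generic fibre has exactly two points, so $\Pi$ has topological degree $2$. By the characterization of Lamy \cite{Lamy05} (see also Bisi and Polizzi \cite{BisiPolizzi2010}), any proper polynomial self-map of $\cplx^2$ of topological degree $2$ is equivalent to $g(z_1,z_2) = (z_1, z_2^2)$; thus $\Pi$ is equivalent to $\tilde\phi$ with $m=1$, $n=2$, and in particular $\widetilde{\Psi}(z,w) = (\Pi(z),w)$ is a proper polynomial self-map of $\cplx^3$.

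With these facts in hand, \Cref{L:ImgHull_Graph_Pro_Map_D2} applies verbatim with $\phi = \Pi$, yielding $\widehat{\widetilde{\Psi}(Y)} = \widetilde{\Psi}(\widehat{Y})$ for $Y = \Gr_{\overline{P\circ\Pi}}(\mathbb{T}^2)$, which is exactly the assertion. There is essentially no obstacle beyond confirming the equivalence hypothesis; the only point that requires any care is invoking the Lamy--Bisi--Polizzi classification to realize $\Pi$ in the normal form $(z_1, z_2^2)$, since the proof of the general lemma used precisely the finite-group structure of the fibres of $\tilde\phi$---multiplication of the coordinates by roots of unity $\lambda_m^k, \lambda_n^r$---to move points of the hull around and rule out spurious points of $\widetilde{\Psi}^{-1}(\widetilde{\Psi}(\widehat{Y}))$. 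Once $\Pi$ is placed in that normal form, the conclusion is immediate.
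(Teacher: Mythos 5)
Your proposal is correct and follows essentially the same route as the paper: the authors also obtain \Cref{L:ImgHull_Graph} by noting that $\Pi$ is a proper polynomial map of topological degree $2$, hence equivalent to $(z_1,z_2^2)$ by the Lamy--Bisi--Polizzi classification, and then specializing \Cref{L:ImgHull_Graph_Pro_Map_D2} to $\phi=\Pi$. Your added verification that fibres of $\Pi$ are the root sets of $t^2-st+p$ is a harmless elaboration of what the paper leaves as ``easy to show.''
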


	By using \Cref{L:ImgHull_Graph}, we can say that
	
	\begin{align*}
		\widehat{\Gr_{\overline{P}}({\Gamma_{\mathbb{G}_2})}}=\widetilde{\Psi}\left(\widehat{\Gr_{\overline{P\circ\Pi}}(\mathbb{{T}}^2)}\right).
	\end{align*}
	Therefore, to give  a description for $\widehat{\Gr_{\overline{P}}({\Gamma_{\mathbb{G}_2})}},$ it is enough to compute $\widehat{\Gr_{\overline{P\circ\Pi}}(\mathbb{{T}}^2)}.$

	\section{Examples}
	\begin{example}
		Let $P(z_1,z_2)=z_1-z_2.$  Then $[z_1,z_2,\overline{P};\Gamma_{\mathbb{G}_{2}}]\ne \smoo(\Gamma_{\mathbb{G}_{2}}).$
	\end{example}	
	
	\noindent {\bf Explanation}
	In view of \Cref{C: Approx_Cont_Func_SymmBidsc}, to demonstrate that $[z_1,z_2,\overline{P};\Gamma_{\mathbb{G}_{2}}]\ne \smoo(\Gamma_{\mathbb{G}_{2}}),$ it suffices to establish that the graph of $\overline{P}$ over $\Gamma_{\mathbb{G}_{2}}$ is not polynomially convex. To achieve this, it is sufficient to show that the graph of $\overline{P\circ \Pi}$ over $\Gamma_{\mathbb{G}_{2}}$ lacks polynomial convexity. Following the notation in \Cref{R:Descriptn_Hull_Jimbo}, we define $h(z)=\frac{1}{z_1}+\frac{1}{z_2}-\frac{1}{z_1z_2}.$ Then
	
	\begin{align*}
		\triangle({z})=
		\begin{vmatrix} 
			\frac{\partial (P\circ\Pi)}{\partial{z_1}}& \frac{\partial (P\circ\Pi)}{\partial z_2}\\[1.5ex]
			\frac{\partial h}{\partial{{z_1}}}& \frac{\partial h}{\partial{z_2}}\\[1.5ex]
		\end{vmatrix}
		=
		\begin{vmatrix} 
			1-z_2& 1-z_1\\[1.5ex]
			\frac{-1}{z^2_1}+\frac{1}{z^2_1z_2}& \frac{-1}{z^2_2}+\frac{1}{z^2_2z_1}\\[1.5ex]
		\end{vmatrix}
		&=\frac{1}{z^2_{1}z^{2}_{2}}(z_1-z_2)(z_1-1)(z_2-1).
	\end{align*}
	\noindent We define $q_{1}:=z_1-1,~~q_2=z_2-1,q_3:=z_{1}-z_{2},$ and $Z_{j}=\{z\in \cplx^{2}:q_{j}(z)=0\},j=1,2,3.$ Therefore, 	
	\begin{align*}
		\Sigma&=\left\{z\in \overline{\mathbb{D}}^{2}\setminus (L\cup \mathbb{T}^2): \triangle(z)=0\right\}\\
		&=\left\{z\in \overline{\mathbb{D}}^{2}\setminus (L\cup \mathbb{T}^2)\right\}\cap[\cup^{3}_{j=1} Z_{j}],
	\end{align*}
	and	
	\begin{align*}
		X&=\left\{z\in \overline{\mathbb{D}}^{2}\setminus (L\cup \mathbb{T}^2): \overline{(P\circ \Pi)(z)}=h(z)\right\}\\
		&=\left\{z\in \overline{\mathbb{D}}^{2}\setminus (L\cup \mathbb{T}^2): \overline{z_1+z_2-z_1z_2}=\frac{1}{z_1}+\frac{1}{z_1}-\frac{1}{z_1z_2}\right\}.
	\end{align*}
	
	\noindent Here $Q_{j}=Z_{j}\cap \mathbb{{T}}^2.$ Clearly,
	\begin{align*}
		\widehat{Q_1}&=\{z\in \cplx^2:z_1=1,|z_2|\le 1\}\ne Q_1;\\
		\widehat{Q_2}&=\{z\in \cplx^2:z_2=1,|z_1|\le 1\}\ne Q_2;\\
		\widehat{Q_3}&=\{z\in \cplx^2:z_1=z_2,|z_1|\le 1\}\ne Q_3.
	\end{align*}
	
	It is evident that $\widehat{Q_{j}}\setminus (\mathbb{T}^2\cup L)\subset X$ holds true only for $j=1,2.$ On the other hand, we note that $(\frac{1}{2},\frac{1}{2})\in \widehat{Q_{3}}\setminus (\mathbb{T}^2\cup L),$ yet $(\frac{1}{2},\frac{1}{2})\notin X.$ Therefore, by \Cref{R:Descriptn_Hull_Jimbo}, we deduce that:
	
	\begin{align*}
		\widehat{\Gr_{\overline{P\circ \Pi}}(\mathbb{T}^2)}=\Gr_{\overline{P\circ \Pi}}(\mathbb{T}^2)\cup \Gr_{\overline{P\circ \Pi}}(\widehat{Q_{1}})\cup \Gr_{\overline{P\circ \Pi}}(\widehat{Q_{2}}).
	\end{align*}
	
	Hence 
	\begin{align*}
		\widehat{\Gr_{\overline{P}}(\Gamma_{\mathbb{G}_{2}})}&=\Psi\left(\Gr_{\overline{P\circ \Pi}}(\mathbb{T}^2) \right) \cup \Psi\left(\Gr_{\overline{P\circ \Pi}}(\widehat{Q_{1})}\right)\cup \Psi\left( \Gr_{\overline{P\circ \Pi}}(\widehat{Q_{2})}\right)\\
		&=	\Gr_{\overline{P}}(\Gamma_{\mathbb{G}_{2}})\cup \{(1+z,z,w):w=\overline{P(1+z,z)}, z\in \overline{\mathbb{D}}\}\\
		&\cup \{(1+z,z,w):w=\overline{P(1+z,z)}, z\in \overline{\mathbb{D}}\}\\
		&=	\Gr_{\overline{P}}(\Gamma_{\mathbb{G}_{2}})\cup \{(1+z,z,w):w=\overline{P(1+z,z)}, z\in \overline{\mathbb{D}}\}\\
		&=	\Gr_{\overline{P}}(\Gamma_{\mathbb{G}_{2}})\cup \{(1+z,z,1):z\in \overline{\mathbb{D}}\}.
	\end{align*}
	
	\begin{example}
		$P(z_1,z_2)=z_1-2z_{2}.$  Then $[z_1,z_2,\overline{P};\Gamma_{\mathbb{G}_{2}}]= \smoo(\Gamma_{\mathbb{G}_{2}}).$
	\end{example}	
	
	\noindent {\bf Explanation} In light of \Cref{C: Approx_Cont_Func_SymmBidsc}, in order to establish that $[z_1,z_2,\overline{P};\Gamma_{\mathbb{G}_{2}}]= \smoo(\Gamma_{\mathbb{G}_{2}}),$ it is sufficient to demonstrate the polynomial convexity of the graph of $\overline{P}$ over $\Gamma_{\mathbb{G}_{2}}$. To accomplish this, it is enough to prove that the graph of $\overline{P\circ \Pi}$ over $\mathbb{T}^2$ is polynomially convex. Following the notation in \Cref{R:Descriptn_Hull_Jimbo}, we have
	$h(z)=\frac{1}{z_1}+\frac{1}{z_2}-\frac{2}{z_1z_2}.$ 
	
	\begin{align*}
		\triangle({z})=&
		\begin{vmatrix} 
			\frac{\partial (P\circ\Pi)}{\partial{z_1}}& \frac{\partial (P\circ\Pi)}{\partial z_2}\\[1.5ex]
			\frac{\partial h}{\partial{{z_1}}}& \frac{\partial h}{\partial{z_2}}\\[1.5ex]
		\end{vmatrix}
		=
		\begin{vmatrix} 
			1-2z_2& 1-2z_1\\[1.5ex]
			\frac{-1}{z^2_1}+\frac{2}{z^2_1z_2}& \frac{-1}{z^2_2}+\frac{2}{z^2_2z_1}\\[1.5ex]
		\end{vmatrix}\\
		=&\frac{1}{z^2_{1}z^{2}_{2}}(z_1+z_2-2-2z_1z_2)(z_2-z_1).
	\end{align*}
	\noindent We define $q_{1}:=z_1+z_2-2-2z_1z_2,~q_2:=z_{2}-z_{1}.$ and $Z_{j}=\{z\in \cplx^{2}:q_{j}(z)=0\},j=1,2.$ Therefore, 	
	\begin{align*}
		\Sigma&=\left\{(z,w)\in \overline{\mathbb{D}}^{2}\setminus (L\cup \mathbb{T}^2): \triangle_{(z,w)}=0\right\}\\
		&=\left\{(z,w)\in \overline{\mathbb{D}}^{2}\setminus (L\cup \mathbb{T}^2)\right\}\cap[\cup^{2}_{j=1} Z_{j}],
	\end{align*}
	and	
	\begin{align*}
		X&=\left\{z\in \overline{\mathbb{D}}^{2}\setminus (L\cup \mathbb{T}^2): \overline{(P\circ \Pi)(z)}=h(z)\right\}\\
		&=\left\{z\in \overline{\mathbb{D}}^{2}\setminus (L\cup \mathbb{T}^2): \overline{z_1+z_2-2z_1z_2}=\frac{1}{z_1}+\frac{1}{z_2}-\frac{2}{z_1z_2}\right\}.
	\end{align*}
	
	\noindent Here $Q_{j}=Z_{j}\cap \mathbb{{T}}^2.$ We now claim that
	\begin{align*}
		\widehat{Q_1}&=\{z\in \cplx^2:z_1+z_2-2z_1z_2-2=0,|z_1|= 1,|z_2|= 1\}= Q_1.
	\end{align*}
	
	\noindent Clearly, $\widehat{Q_1}\subset \{z\in \cplx^2:z_1+z_2-2z_1z_2-2=0,|z_1|\le 1,|z_2|\le 1\}.$ Let $(\alpha,\beta)\in \{z\in \cplx^2:z_1+z_2-2z_1z_2-2=0,|z_1|\le 1,|z_2|\le 1\}\setminus Q_1.$ First, we assume that $|\beta|<1.$ Since $\alpha+\beta-2\alpha\beta-2=0,$ hence 
	\begin{align}\label{E:Pt_OutsideBidsc}
		|2-\alpha|=	|\beta||1-2\alpha|<|1-2\alpha|.
	\end{align}
	Let $\alpha=u+iv.$ Then from (\ref{E:Pt_OutsideBidsc}), we get that 
	\begin{align*}
		&(2-u)^2+v^2<(1-2u)^2+4v^2\\
		&\implies 4+u^2+v^2-4u<1+4(u^2+v^2)-4u\\
		&\implies u^2+v^2>1 \text{ i.e., } |\alpha|>1.
	\end{align*}
	Hence, we conclude that $(\alpha, \beta) \notin \widehat{Q_1}.$ In the case where $|\alpha|<1,$ we can similarly demonstrate that $|\beta|>1,$ leading to the same conclusion, $(\alpha, \beta) \notin \widehat{Q_1}.$ As a result, we establish that $Q_{1}$ is polynomially convex.
	
	\smallskip
	
	\noindent Furthermore, consider $\widehat{Q_2}=\{z\in \mathbb{C}^2:z_1=z_2,|z_1|\le 1\}\ne Q_2.$ Notably, $(\frac{1}{2},\frac{1}{2})\in \widehat{Q_{2}}\setminus (\mathbb{T}^2\cup L),$ while $(\frac{1}{2},\frac{1}{2})\notin X.$ Hence, by \Cref{R:Descriptn_Hull_Jimbo}, we can deduce that:
	
	\begin{align*}
		\widehat{\Gr_{\overline{P\circ \Pi}}(\mathbb{T}^2)}=\Gr_{\overline{P\circ \Pi}}(\mathbb{T}^2).
	\end{align*}
	
	\noindent This implies:
	
	\begin{align*}
		\widehat{\Gr_{\overline{P}}(\Gamma_{\mathbb{G}{2}})}=\Psi\left(\Gr_{\overline{P\circ \Pi}}(\mathbb{T}^2)\right)=\Gr_{\overline{P}}(\Gamma_{\mathbb{G}_{2}}).
	\end{align*}

	\begin{example}
		Let $p_{1}(z_1,z_2)=2z_1+z^2_2,~~p_{2}(z_1,z_2)=z_1-z^2_2,~P(z_1,z_2)=z_1-z_{2}$ and $\phi(z_1,z_2)=(p_{1}(z_1,z_2),p_{2}(z_1,z_2)).$ Therefore $\Omega=\phi(\mathbb{D}^2).$ Then $[z_1,z_2,\overline{P};\Gamma_{\Omega}]=\smoo(\Gamma_{\Omega}).$
	\end{example}	
	
	\noindent {\bf Explanation} 
	According to \Cref{L: Approx_Cont_Func_SymmBidsc}, it follows that $[z_1,z_2,\overline{P};\Gamma_{\Omega}]=\smoo(\Gamma_{\Omega})$ if, and only if, $\Gr_{\overline{P}}(\Gamma_{\Omega})$ exhibits polynomial convexity. Furthermore, the polynomial convexity of $\Gr_{\overline{P}}(\Gamma_{\Omega})$ is equivalent to the polynomial convexity of $\Gr_{\overline{P\circ \phi}}(\mathbb{T}^2)$.
	
	Here $\overline{P\circ \phi}=\overline{z_1+2z^2_2}=\frac{1}{z_1}+\frac{2}{z^2_2}=:h(z)$ on $\mathbb{T}^2.$  
	
	\begin{align*}
		\triangle({z})&=
		\begin{vmatrix} 
			\frac{\partial (P\circ\phi)}{\partial{z_1}}& \frac{\partial (P\circ\phi)}{\partial z_2}\\[1.5ex]
			\frac{\partial h}{\partial{{z_1}}}& \frac{\partial h}{\partial{z_2}}\\[1.5ex]
		\end{vmatrix}
		=
		\begin{vmatrix} 
			1& 4z_2\\[1.5ex]
			\frac{-1}{z^2_1}& \frac{-4}{z^3_2}\\[1.5ex]
		\end{vmatrix}
		=\frac{1}{z^2_{1}z^{3}_{2}}(z_1+z^2_2)(z^2_2-z_1).
	\end{align*}
	\noindent We define $q_{1}:=z_1+z^2_2,~q_2:=z^2_2-z_1,$ and $Z_{j}=\{z\in \cplx^{2}:q_{j}(z)=0\},j=1,2.$ Therefore, 	
	\begin{align*}
		\Sigma&=\left\{(z,w)\in \overline{\mathbb{D}}^{2}\setminus (L\cup \mathbb{T}^2): \triangle_{(z,w)}=0\right\}\\
		&=\left\{(z,w)\in \overline{\mathbb{D}}^{2}\setminus (L\cup \mathbb{T}^2)\right\}\cap[\cup^{2}_{j=1} Z_{j}],
	\end{align*}
	and	
	\begin{align*}
		X&=\left\{z\in \overline{\mathbb{D}}^{2}\setminus (L\cup \mathbb{T}^2): \overline{(P\circ \phi)(z)}=h(z)\right\}\\
		&=\left\{z\in \overline{\mathbb{D}}^{2}\setminus (L\cup \mathbb{T}^2): \overline{z_1+2z^2_2}=\frac{1}{z_1}+\frac{2}{z^2_2}\right\}.
	\end{align*}

	\noindent Here $Q_{j}=Z_{j}\cap \mathbb{{T}}^2.$ Clearly,
	\begin{align*}
		\widehat{Q_1}&=\{z\in \cplx^2:z_1+z^2_2=0, |z_1|\le 1,|z_2|\le 1\}\ne Q_1,~\text{ and}\\
		\widehat{Q_2}&=\{z\in \cplx^2:z^2_2-z_1=0, |z_1|\le 1,|z_2|\le 1\}\ne Q_2.
	\end{align*}

	\noindent It is easy to see that $\widehat{Q_{j}}\setminus (\mathbb{T}^2\cup L)\nsubseteq X$  for $j=1,2.$ Therefore, by a \Cref{R:Descriptn_Hull_Jimbo}, we get that	
	\begin{align*}
		\widehat{\Gr_{\overline{P\circ \phi}}(\mathbb{T}^2)}=\Gr_{\overline{P\circ \phi}}(\mathbb{T}^2).
	\end{align*}
	Hence 
	\begin{align*}
		\widehat{\Gr_{\overline{P}}(\Gamma_{\Omega})}&=\Psi\left(\Gr_{\overline{P\circ \phi}}(\mathbb{T}^2) \right)=	\Gr_{\overline{P}}(\Gamma_{\Omega}).
	\end{align*}

	\begin{example}
		Let $p_{1}(z_1,z_2)=z_1+z_2,~~p_{2}(z_1,z_2)=z^2_1+z^2_2,~P(z_1,z_2)=z^2_1+z_{2}$ and $\phi(z_1,z_2)=(p_{1}(z_1,z_2),p_{2}(z_1,z_2)).$ Therefore $\Omega=\phi(\mathbb{D}^2).$ Then $[z_1,z_2,\overline{P};\Gamma_{\Omega}]\ne \smoo(\Gamma_{\Omega}).$
	\end{example}	
	
	\noindent {\bf Explanation} 
	Based on \Cref{L: Approx_Cont_Func_SymmBidsc}, we can assert that $[z_1,z_2,\overline{P};\Gamma_{\Omega}]\ne \smoo(\Gamma_{\Omega})$ if, and only if, $\Gr_{\overline{P}}(\Gamma_{\Omega})$ lacks polynomial convexity. Furthermore, $\Gr_{\overline{P}}(\Gamma_{\Omega})$ possesses polynomial convexity if, and only if, $\Gr_{\overline{P\circ \phi}}(\mathbb{T}^2)$ is polynomially convex. Therefore, it is enough to show that $\Gr_{\overline{P\circ \phi}}(\mathbb{T}^2)$ is not polynomailly convex.
	
	Here $P\circ \phi=2(z^2_1+z_1z_2+z^2_2).$ Hence, 
	\begin{align*}
		\overline{P\circ \phi}=\overline{2(z^2_1+z_1z_2+z^2_2)}=2\left(\frac{1}{z^2_1}+\frac{1}{z^2_2}+\frac{1}{z_1z_2}\right)=:h(z) \text{ on } \mathbb{T}^2.  
	\end{align*}

	\begin{align*}
		\triangle({z})&=
		\begin{vmatrix} 
			\frac{\partial (P\circ\phi)}{\partial{z_1}}& \frac{\partial (P\circ\phi)}{\partial z_2}\\[1.5ex]
			\frac{\partial h}{\partial{{z_1}}}& \frac{\partial h}{\partial{z_2}}\\[1.5ex]
		\end{vmatrix}
		=
		\begin{vmatrix} 
			2(2z_1+z_2)& 2(2z_2+z_1)\\[1.5ex]
			2(\frac{-2}{z^3_1}-\frac{-1}{z^2_1z_2})& 2(\frac{-2}{z^3_2}-\frac{-1}{z_1z^2_2})\\[1.5ex]
		\end{vmatrix}\\[1.5ex]
		&=\frac{8\alpha^{-1}}{z^3_{1}z^{3}_{2}}(z_1+z_2)(z_2-z_1)(z_1-\alpha z_2)(z_2-\alpha z_1), \text{ where } \alpha=e^{\frac{2\pi i}{3}}.
	\end{align*}
	
	\noindent We define $q_{1}:=z_1+z_2,~q_2:=z_2-z_1,~,q_3=z_1-\alpha z_2,~q_{4}=z_2-\alpha z_1,$ and $Z_{j}=\{z\in \cplx^{2}:q_{j}(z)=0\},j=1,2,3,4.$ Therefore, 	
	\begin{align*}
		\Sigma&=\left\{z\in \overline{\mathbb{D}}^{2}\setminus (L\cup \mathbb{T}^2): \triangle(z)=0\right\}\\
		&=\left\{z\in \overline{\mathbb{D}}^{2}\setminus (L\cup \mathbb{T}^2)\right\}\cap[\cup^{3}_{j=1} Z_{j}],
	\end{align*}
	and	
	\begin{align*}
		X&=\left\{z\in \overline{\mathbb{D}}^{2}\setminus (L\cup \mathbb{T}^2): \overline{(P\circ \phi)(z)}=h(z)\right\}\\
		&=\left\{z\in \overline{\mathbb{D}}^{2}\setminus (L\cup \mathbb{T}^2): \overline{2(z^2_1+z_1z_2+z^2_2)}=2\left(\frac{1}{z^2_1}+\frac{1}{z^2_2}+\frac{1}{z_1z_2}\right)\right\}.
	\end{align*}

	\noindent Here $Q_{j}=Z_{j}\cap \mathbb{{T}}^2.$ Clearly,
	\begin{align*}
		\widehat{Q_1}&=\{z\in \cplx^2:z_1+z_2=0, |z_1|\le 1,|z_2|\le 1\}\ne Q_1;\\
		\widehat{Q_2}&=\{z\in \cplx^2:z_2-z_1=0, |z_1|\le 1,|z_2|\le 1\}\ne Q_2;\\
		\widehat{Q_3}&=\{z\in \cplx^2:z_1-\alpha z_2=0, |z_1|\le 1,|z_2|\le 1\}\ne Q_3;\\
		\widehat{Q_4}&=\{z\in \cplx^2:z_2-\alpha z_1=0, |z_1|\le 1,|z_2|\le 1\}\ne Q_4.
	\end{align*}

	\smallskip
	
	\noindent Again $\widehat{Q_{j}}\setminus (\mathbb{T}^2\cup L)\nsubseteq X$  for $j=1,2,$ and $\widehat{Q_{j}}\setminus (\mathbb{T}^2\cup L)\subset X$ for $j=3,4.$ Therefore, by \Cref{R:Descriptn_Hull_Jimbo}, we get that	
	\begin{align*}
		\widehat{\Gr_{\overline{P\circ \phi}}(\mathbb{T}^2)}=\Gr_{\overline{P\circ \phi}}(\mathbb{T}^2)\cup \Gr_{\overline{P\circ \phi}}(\widehat{Q_{3}})\cup \Gr_{\overline{P\circ \phi}}(\widehat{Q_{4}}).
	\end{align*}
	Hence 
	\begin{align*}
		\widehat{\Gr_{\overline{P}}(\Gamma_{\Omega})}&=\Psi\left(\Gr_{\overline{P\circ \phi}}(\mathbb{T}^2) \right)=\Gr_{\overline{P}}(\Gamma_{\Omega})\cup \Psi\left(\Gr_{\overline{P\circ \phi}}(\widehat{Q_{3}})\right)\cup \Psi\left(\Gr_{\overline{P\circ \phi}}(\widehat{Q_{4}})\right).
	\end{align*}

	\noindent{\bf Acknowledgements.}
	We would like to express our sincere gratitude to Professor Franc Forstneri\v{c} for pointing out \Cref{R:Image_AlgVariety} in \cite{Cirka69} and showing us the proof of \Cref{L:Img_Analytic Variety}. The first named author was partially supported by a Matrics Research Grant (MTR/2017/000974) of SERB, Dept. of Science and Technology, Govt. of India, for the beginning of this work and is supported by a Core Research Grant (CRG/2022/003560) of SERB, Dept. of Science and Technology, Govt. of India,  for the later part of the work. The second named author's work received partial support from an INSPIRE Fellowship (IF 160487) provided by the Dept. of Science and Technology, Govt. of India, during the early stage of this work. Presently, this research is supported by a research grant from SERB (Grant No. CRG/2021/005884), Dept. of Science and Technology, Govt. of India.


\end{document}